  \newtheorem{theorem}{Theorem}[section]
  \newtheorem{corollary}[theorem]{Corollary}
  \newtheorem{lemma}[theorem]{Lemma}
  \theoremstyle{definition}
  \newtheorem{definition}[theorem]{Definition}
  \newtheorem{remark}[theorem]{Remark}
  \newtheorem{example}[theorem]{Example}
  \numberwithin{equation}{section}
\newcommand\restr[2]{{
  \left.\kern-\nulldelimiterspace 
  #1 
  \littletaller 
  \right|_{#2} 
  }}
\newcommand{\littletaller}{\mathchoice{\vphantom{\big|}}{}{}{}}
  \title{\sc Carath\'eodory-type selection and random fixed point theorems for discontinuous correspondences}
 \author{
{\bf Anuj Bhowmik}\thanks{Indian Statistical Institute, 203 B.T. Road, Kolkata 711108, India. 
 }\\
Indian Statistical Institute\\
\vspace{0.4cm}
e-mail: anujbhowmik09@gmail.com \\
{\bf Nicholas C. Yannelis}\thanks{Department of Economics, The University of Iowa, Iowa City, IA, USA 
 }\\
The University of Iowa\\
\vspace{0.4cm}
e-mail: nicholasyannelis@gmail.com } 
\date{\today}
\begin{document}

  \maketitle

\begin{abstract}
Research in Economics and Game theory has necessitated results on Carath\'eodory-type selections. In particular, one has to obtain Carath\'eodory type-selections from correspondences that need not be continuous (neither lower-semicontinuous nor upper-semicontinuous). We provide new theorems on Carath\'eodory type-selections that include as corollaries the results in Kim-Prikry-Yannelis \cite{KPY:87}. We also, obtain new random fixed-point theorems, random maximal elements, random (Nash) equilibrium and Bayesian equilibrium extending and generalizing theorems of Browder 
\cite{Browder:68}, Fan \cite{Fan:52} and Nash \cite{Nash}, among others.

\end{abstract}

{\bf Keywords:} 

\section{Introduction}

Numerous prominent games in economic theory--such as the Hotelling location game, Bertrand competition, Cournot competition with fixed costs, and various auction frameworks--are characterized by discontinuous payoff functions. Consequently, they fail to satisfy the conditions stipulated by Nash’s original existence theorem and its extensions to infinite-dimensional strategy spaces.
Despite this, such games frequently admit the existence of at least one pure strategy Nash equilibrium. The seminal 
work of Dasgupta-Maskin \cite{DM} and Reny \cite{Reny:99} allow discontinuity in 
the payoff functions and subsumes earlier equilibrium existence result by Nash \cite{Nash}
covering many such examples. We refer to a survey article by Reny  \cite{Reny:20} and 
some recent papers \cite{Khan1, Khan2, Khan3} for the recent development of this research area. 
Note that all of these works are limited to 
finitely many players, whereas games involving a continuum of players constitute a separate 
branch of research in the literature (see for example, \cite{HeSun, Khan, KhanSun}).
This framework is particularly useful in economics and social sciences, where the population is large 
enough that individual actions have negligible impact on the overall outcome.

\medskip
Recent advances have been made in establishing the existence of competitive equilibria in economies with finitely many agents and discontinuous preferences, as demonstrated in the works of He-Yannelis \cite{HY:2017}, Khan et al. \cite{Khan1,Khan2}, Podczeck-Yannelis \cite{Podyan}, and Reny \cite{Reny:16}, among others. These contributions have shown that the strong assumptions traditionally required for proving equilibrium existence in earlier foundational works-such as those by Arrow and Debreu \cite{AD}, McKenzie \cite{Mc:54, Mc:59, Mc:81}, Mas-Colell \cite{Mas-colell},
Shafer and Sonnenschein \cite{SS}, and Shafer \cite{Shafer}--can, in fact, be relaxed.
However, when extending the analysis to economies with a continuum of agents and discontinuous preferences—a framework that more accurately captures the essence of perfect competition, where each individual agent has a negligible effect on market prices—new mathematical techniques become necessary. In particular, the recent work by Bhowmik-Yannelis \cite{BY} highlights the need for Carathéodory-type selection theorems for correspondences that are neither upper nor lower semicontinuous.
In the following section, we outline how this mathematical problem naturally arises within the context of economic theory.

\medskip
Consider a correspondence $F$ defined on the product space $T\times X$ ($T$ is a measurable space and $X$ is a topological space) taking values on a Banach space $Y$. A 
Carath\'eodory-type selection is a function $f:T\times X\to Y$ such that $f(t,x)$ belongs to $F(t,x)$ for all $(t,x) \in T\times X$ and it is  continuous in $x$ and
measurable in $t$. The correspondence $F$ is typically assumed to be lower-semicontinuous (l.s.c.) in $x$ and also jointly measurable. As it was pointed out in 
Kim-Prikry-Yannelis (KPY) \cite{KPY:87},\footnote{See also Kim-Prikry-Yannelis \cite{KPY:88} for a similar result.} Carath\'eodory-type selections generalize the Michael \cite{Michael} continuous selections theorems. Such selections arise naturally in economic problems. 
However, in economic applications the domain of the correspondence $F$ is not defined on a ``product space”, $T\times X$, but on an arbitrary subset of $T\times X$. 
The later complicates the proofs considerably. KPY \cite{KPY:87} proved a Carath\'eodory-type selection theorem for a correspondence whose 
domain was an arbitrary subset of a product space.

\medskip
The main purpose of this paper is to go beyond the KPY results and obtain Carath\'eodory-type selections from correspondences that may not be l.s.c., and thus generalize the KPY theorems as well as other related results in the literature (this is shown in Section \ref{sec:main}). To achieve this, we generalized the continuous inclusion property of He-Yannelis \cite{HY:2017} to a two-variable framework (one argument belongs to a measure space and the other belongs to a metric space), where the correspondence may not be l.s.c. However, the relaxation of the continuity assumption on the correspondence necessitates new arguments  and details that are far from trivial. This selection theorem also enables us to establish a random fixed point theorem. 
Furthermore, we provide applications of our new Carath\'eodory-type selection (and fixed-point) theorems. In particular, we prove new results on the existence of an equilibrium of a large abstract economy, random maximal elements, random (Nash) equilibrium and Bayesian equilibrium and thus extending and generalizing theorems of  Bhowmik-Yannelis \cite{BY}, Browder \cite{Browder:68}, Fan \cite{Fan:52}, Khan \cite{Khan}, Kim-Yannelis \cite{KY}, Nash \cite{Nash}, and Yannelis \cite{Yannelis:91a}, among others.

\section{Preliminaries}\label{sec:pre}

{\bf 2.1 Notations and conventions:} 

\begin{itemize}
\item $\emptyset$ \hspace{9pt} denotes the empty set, 
\item $2^A$ \hspace{9pt} denotes the power set of $A$,
\item $\mathbb R$ \hspace{12pt} denotes the set of real numbers,
\item ${\rm cl}A$ \hspace{5pt} denotes the closure of the set $A$,
\item ${\rm con}A$ denotes the convex hull of the set $A$,
\item $\overline{\rm con}A$ denotes the closed convex hull of the set $A$,
\item ${\rm bd}A$ \hspace{5pt} denotes the boundary of the set $A$,
\item $\setminus$ \hspace{16pt} denotes the set-theoretic subtraction,
\item ${\rm proj}$ \hspace{3pt} denotes projection,
\item If $\Phi: X \to 2^Y$ is a correspondence then $\Phi_{|U}: U \to 2^Y$ denotes the restriction of 
$\Phi$ to $U$.
\end{itemize}

\medskip
\noindent
{\bf 2.2 Definitions:} Let $X, Y$ be two topological spaces. A correspondence $\Phi: X \to 2^Y$ is said to be {\it upper-semicontinuous} 
({\it u.s.c.}) if the set $\left\{x \in X: \Phi(x) \subseteq V\right\}$ is open in $X$ for every open subset $V$ of $Y$. The 
{\it graph} of the correspondence $\Phi: X \to 2^Y$ is defined as 
\[
G_{\Phi}= \left\{(x,y) \in X \times Y: y \in \Phi(x)\right\}.
\] 
A correspondence $\Phi: X \to 2^Y$ is said to have a \emph{closed graph} if the set $G_{\Phi}$ is closed in $X \times Y$. 
A correspondence $\Phi: X \to 2^Y$ is said to be \emph{lower-semicontinuous} ({\it l.s.c.}) if the set $\left\{x \in X: \Phi(x) \cap V \neq \emptyset\right\}$ is open in $X$ for every open subset $V$ of $Y$. A correspondence $\Phi: X \to 2^Y$ is said to have \emph{open lower sections} if for each $y \in Y$, the set $\Phi^{-1}(y) = \left\{x \in X: y \in \Phi(x)\right\}$ is open in $X$. If 
 $\Phi(x)$ is open in $Y$ for each $x \in X$, $\Phi$ is said to have \emph{open upper sections}. Define 
$\mathscr P_0(X)=2^X\setminus \{\emptyset\}$. For any nonempty open set $W$ in $X$, we define\footnote{We assume that $\emptyset^u=\emptyset^\ell=\emptyset$.}
\[
W^{\rm u}=\{A\in \mathscr P_0 (X): A\subseteq W\} \mbox{ and } W^{\ell}=\{A\in \mathscr P_0 (X): A\cap W\neq \emptyset\}.
\] 
Notice that $\{W^{\rm u}: W \mbox{ open in } X\}$ forms a base for a topology on $\mathscr P_0 (X)$. This topology is called 
an \emph{upper topology} on $\mathscr P_0 (X)$ and is denoted by $\tau_u$. On the other hand, $\{W^{\ell}: W \mbox{ open in } X\}$ 
forms a subbase for a topology on $\mathscr P_0 (X)$. This topology is termed as a \emph{lower topology} on $\mathscr P_0 (X)$ and is 
denoted by $\tau_\ell$. The collection of sets of the form
\[
W_0^u\cap W_1^\ell\cap\cdots\cap W_n^\ell,
\]
where $W_0,\cdots,W_n$ are open subsets of $X$, is closed under finite intesections. Since $X^u=X^\ell=\mathscr P_0(X)$, it thus forms 
a base for a topology on $2^X$. This topology is known as the 
\emph{Vietoris topology} and is denoted by $\tau_V$. We 
are most interested in the relativization of this topology to the space of nonempty closed subsets of $X$, denoted by $\mathscr F_0(X)$
and the space of nonempty compact susbets of $X$, denoted by $\mathscr K_0(X)$. 
For any nonempty valued correpondence $\Phi: X \to 2^Y$, the following holds: 
\begin{itemize}
\item[(i)] $\Phi$ is upper-semicontinuous if and only if it is a continuous function from $X$ to 
$(\mathscr P_0(Y), \tau_u)$. 
\item[(ii)] $\Phi$ is lower-semicontinuous if and only if it is a continuous function from $X$ to 
$(\mathscr P_0(Y), \tau_\ell)$.
\item[(iii)] $\Phi$ is continuous if and only if it is a continuous function from $X$ to 
$(\mathscr P_0(Y), \tau_V)$.
\end{itemize}

\medskip
Let $(X,d)$ be a metric space. For any $A, B\in \mathscr F_0(X)$, define
 \[
 H(A, B)= \max\left\{\sup_{a\in A} {\rm dist}(a, B), \
 \sup_{b\in B}{\rm dist}(b, A)\right\},
 \]
 where
 \[
 {\rm dist}(a, B)= \inf_{b\in B}d(a, b).
 \]
 It can be readily checked that $H:\mathscr F_0(X)\times \mathscr F_0(X)\to \mathbb R_+$
 is a metric on $\mathscr F_0(X)$, called the
 \emph{Hausdorff metric}. For any $A \subseteq X$
 and $\varepsilon> 0$, let
 \[
 N_\varepsilon (A)= \left\{x\in X: {\rm dist}(x,
 A)< \varepsilon\right\}.
 \]
 The metric $H$ can also be expressed as
 \[
 H(A, B)= \sup\left\{\left|{\rm dist}(x, A)- {\rm dist}(x,
 B)\right|:x\in X\right\},
 \]
 or
 \[
 H(A, B)= \inf\left\{\varepsilon> 0: A\subseteq
 N_\varepsilon(B) \mbox{ and } B\subseteq N_\varepsilon(A)
 \right\}
 \]
 for $A, B\in \mathscr F_0(X)$. The topology
 $\tau_H$ on ${\mathscr F}_0 (X)$,
 induced by $H$, is called the \emph{Hausdorff metric
 topology}. If the function $\Phi: X \to \left({\mathscr F}_0 (Y), {\tau}_H\right)$ is continuous, then 
the correspondence $\Phi:X \to 2^Y$ is called \emph{Hausdorff
 continuous}. Let $\{A_n:n\ge 1\}\subseteq \mathscr
 P_0(X)$. A
 point $x\in X$ is called a \emph{limit point}
 of $\{A_n:n\ge 1\}$ if there exist $N\ge 1$ and points
 $x_n\in A_n$ for each $n \ge N$ such that $\{x_n:n \ge N\}$
 converges to $x$. The set of limit points of $\{A_n:n
 \ge 1\}$ is denoted by ${\rm Li}A_n$. Similarly, a point
 $x\in X$ is called a \emph{cluster point} of
 $\{A_n:n\ge 1\}$ if there exist positive integers $n_1< n_2
 <\cdots$ and for each $k$ a point $x_k \in A_{n_k}$ such
 that $\{x_k:k\ge 1\}$ converges to $x$. The set of
 cluster points of $\{A_n:n\ge 1\}$ is denoted by ${\rm Ls}A_n$. It is clear that 
 ${\rm Ls}A_n\subseteq {\rm Ls}
 A_n$, and both ${\rm Li}A_n$ and ${\rm Ls}A_n$ are closed
 (possibly empty) sets. If ${\rm Ls}A_n \subseteq {\rm Li}
 A_n$ then ${\rm Li}A_n= {\rm Ls}A_n= A$ is called the
 \emph{limit} of the sequence $\{A_n:n\ge 1\}$. Note that
 ${\rm Ls}A_n= {\rm Ls}\ {\rm cl}A_n$ and ${\rm Li}A_n=
 {\rm Li}\ {\rm cl}A_n$.

\medskip
Let $(T, \mathscr T, \mu)$ be a complete finite measure space, i.e., $\mu$ is a real-valued, non-negative, countably additive measure defined on a complete $\sigma$-field 
$\mathscr T$ of subsets of $T$ such that $\mu(T) < \infty$. We say that  $(T, \mathscr T, \mu)$ is \emph{atomic} if it can be written as the union of countably many atoms. 
Throughout the rest of this section, we assume that $Y$ is a Banach space. We denote by $L_{1}(\mu, Y)$ the space of 
equivalence classes of $Y$-valued Bochner integrable functions $f: T \to Y$ normed by 
$\|f\| = \int_{T}\|f(t)\|d\mu(t)$.
A correspondence $\Phi: T \to 2^Y$ is said to be \emph{integrably bounded} if there exists an element
$g \in L_{1}(\mu, \mathbb R)$ such that 
\[
{\rm sup}\left\{\|x\|: x \in \Phi(t)\right\} \leq g(t) \, \mu\mbox{-a.e.} 
\]
A correspondence $\Phi: T \to 2^Y$ is said to have a \emph{measurable graph} if 
$G_{\Phi} \in {\mathscr T} \otimes \mathscr{B}(Y)$, where $\mathscr{B}(Y)$ denotes 
the Borel $\sigma$-algebra on $Y$ and $\otimes$ denotes $\sigma$-product field.
A correspondence $\Phi: T \to 2^Y$ is said to be \emph{lower measurable} if 
$\left\{t \in T: \Phi(t) \cap V \neq \emptyset\right\} \in \mathscr T$ for every open subset $V$ of $Y$. It is worth pointing out 
that if $\mathscr T$ is complete and a correspondence $\Phi: T \to 2^Y$ has a measurable graph, then 
$\Phi$ is lower measurable. Moreover, if if a correspondence $\Phi: T \to 2^Y$ is closed valued and lower measurable, 
then $\Phi$ has a measurable graph. For an extensive survey, see Yannelis \cite{Yannelis:91a}. 

\medskip
Let now, $X$ be a topological space. Let $\Phi: X \to 2^Y$ be a nonempty valued correspondence. 
A function $\varphi: X \to Y$ is said to be a \emph{continuous selection} from $\Phi$ if $\varphi(x) \in \Phi(x)$ for all $x \in X$, and 
$\varphi$ is continuous. Let $(T, \mathscr T, \mu)$ be an arbitrary measure space. Let $\Psi: T 
\to 2^Y$ be a non-empty valued correspondence. A function $\psi: T \to \mathbb R^\ell$ is said to be a \emph{measurable selection} 
of $\Psi$ if $\psi(t) \in \Psi(t)$ for all $t \in T$, and $\psi$ is measurable. We denote by $\mathcal S^1_\Psi$ the set of integrable 
selections of $\Psi$, i.e.,
\[
\mathcal S^1_\Psi:=\left\{\psi\in L_1(\mu, Y): \psi \mbox{ is a measurable selection of } \Psi\right\}.
\]
For any correpondence $\Psi:T\to 2^Y$, the \emph{integral} of $\Psi$ is defined by 
\[
\int_T \Psi \, d\mu=\left\{\int_T \psi \, d\mu: \psi\in \mathcal S_\Psi^1\right\}.
\] 
{\bf Aumann Measurable Selection Theorem:} Let $(T, \mathscr T, \mu)$ be a complete finite measure space, 
$Y$ be a complete, separable metric space and $\Psi: T\to 2^Y$ be a nonempty valued correspondence with 
a measurable graph. Then there is a measurable function $\psi:T\to Y$ such that $\psi(t)\in \Psi(t)$ $\mu$-a.e.

\medskip
\noindent
{\bf Kuratowski and Ryll-Nardzewski Measurable Selection Theorem:} Let $(T,\mathscr T)$ be a measurable space, $Y$ 
be a separable metric space and $\Psi:T\to 2^Y$ be a lower measurable, closed, nonempty valued
correspondence. Then there exists a measurable selection $\psi$ from $\Psi$. 

\medskip
Note that, in the Aumann measurable slection theorem, the completeness of  $(T, \mathscr T, \mu)$ implies that $\psi(t)\in \Psi(t)$ for all 
$t\in T$, which means that $\psi$ is infact a measurable selection of $\Psi$. Futhermore, none of the above measurable selection theorem 
implies the other and under the hypothesis of any one the above measurable selection theorems, the integral  of $\Psi$ is nonempty.

\medskip
In what follows, we define the concept of a Carath\'eodory-type selection which roughly speaking combines the notions of continuous selection and measurable selection. 
To do this, for any correspondence $\Phi:T\times Z\to 2^Y$, we first define 
\[
U_\Phi=\{(t, x)\in T\times Z: \Phi(t,x)\neq \emptyset\}.
\] 
For any $t\in T$, let $U_\Phi^t=\{x\in Z : (t, x)\in U_\Phi\}$ denote the $t$-section of $\Phi$,
and for any $x\in Z$, let $U_\Phi^x=\{t\in T:(t, x)\in U_\Phi\}$ denote the $x$-section of $\Phi$.
Let $Z$ be a topological space and $\Phi: T \times Z 
\to 2^Y$ be a nonempty valued correspondence. A function $\varphi: T \times Z \to Y$ is said to be a \emph{Carath\'eodory-type selection} from $\Phi$ if $\varphi(t,z) \in \Phi(t,z)$ 
for all $(t,z) \in U_\Phi$, $\varphi(\cdot,z):U_\Phi^z\to Y$ is measurable for all $z \in Z$, and $\varphi(t,\cdot):U_\Phi^t\to Y$ is continuous for all $t \in T$.

\medskip
If $B$ is a closed, convex subset of a normed linear space, then a \emph{supporting} set of $B$ is a closed 
convex subset $S$ of $B$, $S \neq B$, such that if an interior point of a segment in $B$ is in $S$, then the whole segment is in $S$. The set of 
all elements of $B$ which are not in any supporting set of $B$ will be denoted by $\mathbb I(B)$. The following facts below are due to 
Michael \cite{Michael}.

\begin{lemma}\label{lem1}
If a convex subset $B$ of $Y$ is either closed or has an interior point or is of finite-dimensional, then $\mathbb I({\rm cl} B) \subset B$.
\end{lemma}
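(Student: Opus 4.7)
The plan is to prove the contrapositive: if $x\in {\rm cl}\, B\setminus B$, then $x$ lies in some supporting set of ${\rm cl}\, B$. Since a supporting set is by definition a proper closed convex face of ${\rm cl}\, B$, the natural candidate containing $x$ is the intersection of ${\rm cl}\, B$ with a supporting hyperplane at $x$. The ``closed'' case of the lemma is immediate, because ${\rm cl}\, B=B$ forces $\mathbb I({\rm cl}\, B)=\mathbb I(B)\subseteq B$, so only the ``interior point'' and ``finite-dimensional'' cases require a genuine argument.

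First I would dispose of the case in which $B$ has an interior point in $Y$. I would invoke the standard fact that for a convex set with nonempty interior one has ${\rm int}(B)={\rm int}({\rm cl}\, B)$, so any $x\in {\rm cl}\, B\setminus B$ lies in ${\rm bd}({\rm cl}\, B)$. Applying the geometric Hahn--Banach theorem to the convex body ${\rm cl}\, B$ at the boundary point $x$, I obtain a continuous linear functional $\varphi\in Y^{*}$ and a scalar $\alpha$ such that $\varphi(z)\le\alpha$ on ${\rm cl}\, B$ and $\varphi(x)=\alpha$. Setting $S=\{z\in {\rm cl}\, B:\varphi(z)=\alpha\}$, one sees that $S$ is closed and convex, $S\neq {\rm cl}\, B$ (every interior point satisfies $\varphi<\alpha$), and linearity of $\varphi$ forces the face property: if an interior point of a segment $[z_1,z_2]\subseteq {\rm cl}\, B$ lies in $S$, then $\varphi(z_1)=\varphi(z_2)=\alpha$, so the whole segment lies in $S$. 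Hence $S$ is a supporting set of ${\rm cl}\, B$ containing $x$, as required.

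For the finite-dimensional case I would pass to the affine hull of $B$, which is a closed subspace of $Y$ linearly homeomorphic to a Euclidean space. There $B$ automatically has nonempty relative interior, and the identity ${\rm relint}(B)={\rm relint}({\rm cl}\, B)$ again shows that any $x\in {\rm cl}\, B\setminus B$ must lie in the relative boundary of ${\rm cl}\, B$. A finite-dimensional separation argument then produces an affine functional on the affine hull, equal to $\alpha$ at $x$ and bounded above by $\alpha$ on ${\rm cl}\, B$; this functional extends continuously to $Y$ by Hahn--Banach, and the same face-property check shows that its intersection with ${\rm cl}\, B$ is a supporting set containing $x$.

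The main delicate point I anticipate is verifying that the candidate $S$ really qualifies as a \emph{supporting set} in the precise sense used by the paper: a proper closed convex subset satisfying the face property. The face property itself is essentially automatic from linearity of the separating functional. The substantive step is properness, $S\neq {\rm cl}\, B$, and it is exactly this that forces one of the three alternative hypotheses: each of them guarantees the existence of some (possibly relative) interior point of ${\rm cl}\, B$ that cannot lie on any supporting hyperplane, ruling out $S={\rm cl}\, B$ and completing the argument.
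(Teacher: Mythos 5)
Your argument is correct. The paper does not actually prove this lemma---it is stated as a known fact and attributed to Michael \cite{Michael}---and your contrapositive argument (a point of ${\rm cl}\,B\setminus B$ must lie on a supporting hyperplane, whose intersection with ${\rm cl}\,B$ is a proper closed face, with properness secured by the existence of an interior or relative-interior point under each of the three hypotheses) is essentially the classical proof from that source.
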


\begin{lemma}\label{lem2}
Let $B$ be a nonempty, closed, convex separable subset of a Branch space $Y$, and $\{y_{i}:i=1,2,\cdots\}$ be a dense subset of $B$. If 
\begin{equation*}
    z_{i} = y_{i} + \frac{(y_{i}-y_{1})}{\max\{1,||y_{i}-y_{1}||\}} \mbox{ for all } i\ge 1 \mbox{ and } z =\sum^{\infty}_{i=1} \frac{1}{2^i} z_{i},
\end{equation*}
\\then $z\in \mathbb I(B)$.
\end{lemma}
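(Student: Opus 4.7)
The plan is to show $z \in B$ by rewriting the series as a convex combination of the $y_i$, and then to argue by contradiction that $z$ cannot lie in any supporting set $S$: assuming $z \in S$, I will force every $y_i$ into $S$, so that density of $\{y_i\}$ and closedness of $S$ yield $S = B$, contradicting $S \neq B$.

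For the first step, set $\lambda_i := 1/\max\{1, \|y_i - y_1\|\} \in (0,1]$ for $i \geq 2$, so that $z_i = (1+\lambda_i) y_i - \lambda_i y_1$; note also $z_1 = y_1$. Collecting coefficients in the series for $z$ and setting $\mu := \sum_{i \geq 2} \lambda_i / 2^i$, a direct calculation gives
\[
z \;=\; \Bigl(\tfrac{1}{2} - \mu\Bigr)\, y_1 \;+\; \sum_{i \geq 2} \frac{1+\lambda_i}{2^i}\, y_i.
\]
Because each $\lambda_i \leq 1$ we have $\mu \leq 1/2$, so the coefficient $c_1 := \tfrac{1}{2} - \mu$ is nonnegative; with $c_i := (1+\lambda_i)/2^i$ for $i \geq 2$ all coefficients are nonnegative and total $1$. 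Hence $z = \sum_{i} c_i y_i$ is a convex combination of $\{y_i\}$, and closedness and convexity of $B$ give $z \in B$.

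For the second step, suppose $z \in S$. For each $i \geq 2$ the coefficient $c_i$ lies strictly in $(0,1)$, and the point $w_i := (1-c_i)^{-1} \sum_{j \neq i} c_j y_j$ is a convex combination of $\{y_j : j \neq i\}$, so $w_i \in B$. The identity $z = c_i y_i + (1 - c_i) w_i$ then realizes $z$ as an interior point of the segment $[y_i, w_i] \subseteq B$, and the defining face property of the supporting set $S$ forces the whole segment, and in particular $y_i$, into $S$. This gives $y_i \in S$ for every $i \geq 2$. To include $y_1$: if $c_1 > 0$ the same segment argument applies with $i = 1$; otherwise $c_1 = 0$ forces $\|y_i - y_1\| \leq 1$ for all $i \geq 2$, and since the convex set $B$ (nonsingleton, else the lemma is trivial) has no isolated points, closedness of $S$ pulls $y_1$ into $S$ as a limit of $\{y_i : i \geq 2\} \subseteq S$. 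Thus $\{y_i\} \subseteq S$, and density of $\{y_i\}$ together with closedness of $S$ yield $S = B$, the required contradiction, so $z \in \mathbb{I}(B)$.

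The main obstacle I anticipate is the bookkeeping of the first step: the scaling factor $\max\{1, \|y_i - y_1\|\}$ in the definition of $z_i$ is calibrated precisely so that $\lambda_i \leq 1$, ensuring $\mu \leq \tfrac{1}{2}$ and allowing the excess mass produced by the vectors $z_i - y_i$ to be absorbed by a nonnegative coefficient on $y_1$. Without this calibration the series would only represent $z$ as an element of the affine hull of $B$ rather than of $B$ itself, and establishing $z \in B$ directly would fail.
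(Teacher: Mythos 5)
The paper does not actually prove this lemma; it is stated without proof as a fact imported from Michael's \emph{Continuous selections I}, so there is no in-paper argument to compare yours against. Your proof is correct and is essentially the standard argument behind Michael's lemma: the identity $z=\bigl(\tfrac12-\mu\bigr)y_1+\sum_{i\ge 2}\tfrac{1+\lambda_i}{2^i}y_i$ checks out (the coefficients are nonnegative because $\lambda_i\le 1$, and they sum to $1$), so $z\in B$ as a limit of normalized partial convex combinations; and writing $z=c_iy_i+(1-c_i)w_i$ with $0<c_i<1$ correctly forces each $y_i$ into any supporting set containing $z$, whence $S\supseteq\mathrm{cl}\{y_i\}=B$, contradicting $S\ne B$. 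Two harmless points you leave implicit: the segment argument presupposes $w_i\ne y_i$ (if $w_i=y_i$ then $z=y_i\in S$ directly, so the conclusion still holds), and the convergence of $\sum 2^{-i}z_i$ is part of the hypothesis of the lemma rather than something to be established (it can fail for unbounded $B$). Your handling of the $c_1=0$ case is also fine, and in fact slightly more than needed, since $\{y_i:i\ge 2\}$ is already dense in $B$ whenever $B$ is not a singleton.
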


\section{Continuous inclusion property}\label{sec:model}

The following Carath\'eodory-type selection theorem is due to KPY \cite{KPY:87}. 
\begin{theorem}\label{thm:KPY}
Let $(T,\mathscr T,\mu)$ be a complete finite measure space, $Y$ be a separable Banach space, and $Z$ be a complete, 
separable metric space. Suppose that $\Psi:T\times Z\to 2^Y$ be a convex $($possibly empty-$)$ valued 
correspondence such that $\Psi$ is jointly lower measurable and $\Psi(t,\cdot)$ is lower-semicontinuous for all
$t\in T$.  Furthermore, any one of the following conditions is true: $Y$ is finite-dimensional; $\Psi$ is closed-valued; and 
${\rm int}\Psi(t,x)\neq \emptyset$ for all $(t,x)\in U_\Psi$. Then there exists a Carath\'eodory-type selection 
$\psi:U_\Psi\to Y$ of $\restr{\Psi}{U_\Psi}$. 
\end{theorem}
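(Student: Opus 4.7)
The plan is to adapt Michael's continuous-selection construction to the Carath\'eodory setting by building a uniformly Cauchy sequence of ``approximate'' Carath\'eodory selections. I would first reduce to the closed-valued case. In each of the three alternative hypotheses, Lemmas \ref{lem1} and \ref{lem2} allow one to replace a Carath\'eodory selection of ${\rm cl}\,\Psi$ by one that actually lands in $\Psi$: perturb the approximation by a vanishing multiple of the canonical ``deep interior'' point $z = \sum 2^{-i} z_i$ from Lemma \ref{lem2}, constructed from a jointly measurable enumeration of a dense sequence in ${\rm cl}\,\Psi(t,x)$. Lemma \ref{lem1} guarantees the perturbed value lies in $\Psi(t,x)$.

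The iterative construction would produce $\varphi_n : U_\Psi \to Y$ that is measurable on each $U_\Psi^x$, continuous on each $U_\Psi^t$, and satisfies
\[
{\rm dist}(\varphi_n(t,x), \Psi(t,x)) < 2^{-n} \quad \mbox{and} \quad \|\varphi_{n+1}(t,x) - \varphi_n(t,x)\| < 2^{-n+1}.
\]
These bounds force $\{\varphi_n\}$ to be uniformly Cauchy; its limit $\psi$ inherits both regularity properties, closedness of ${\rm cl}\,\Psi(t,x)$ places $\psi(t,x) \in {\rm cl}\,\Psi(t,x)$, and the reduction above then pulls it back into $\Psi(t,x)$.

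For the inductive step, fix a countable dense set $\{z_k\} \subset Z$. On $U_\Psi^{z_k} \in \mathscr T$, the correspondence $t \mapsto {\rm cl}\,\Psi(t, z_k)$ is lower measurable and closed-valued, so Kuratowski--Ryll-Nardzewski yields a measurable selection $y_k$. Lower semicontinuity of $\Psi(t,\cdot)$ combined with continuity of $\varphi_n(t,\cdot)$ makes each set
\[
V_{n,k} = \left\{(t,x) \in U_\Psi : y_k(t) \in N_{2^{-n-1}}(\Psi(t,x)) \mbox{ and } \|y_k(t) - \varphi_n(t,x)\| < 2^{-n+1}\right\}
\]
open in the $x$-variable and jointly measurable, while density of $\{z_k\}$ in each $U_\Psi^t$ covers $U_\Psi^t$. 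I would then form the explicit partition of unity
\[
\lambda_{n,k}(t,x) = \frac{\max\{0,\, 2^{-n-1} - {\rm dist}(y_k(t), \Psi(t,x))\}}{\sum_j \max\{0,\, 2^{-n-1} - {\rm dist}(y_j(t), \Psi(t,x))\}}
\]
(using a locally finite truncation of the index set) and set $\varphi_{n+1}(t,x) = \sum_k \lambda_{n,k}(t,x)\, y_k(t)$. Convexity of $\Psi(t,x)$ keeps $\varphi_{n+1}(t,x)$ inside $N_{2^{-n-1}}(\Psi(t,x))$, while the ball constraint built into $V_{n,k}$ controls $\|\varphi_{n+1} - \varphi_n\|$.

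The hard part is managing measurability in $t$ and continuity in $x$ simultaneously when $U_\Psi$ is an arbitrary subset of $T \times Z$ rather than a product. One must verify that $(t,x) \mapsto {\rm dist}(y_k(t), \Psi(t,x))$ is a Carath\'eodory-type function --- measurable in $t$ via joint lower measurability of $\Psi$ and a Castaing-type representation, and upper-semicontinuous in $x$ via lower semicontinuity of $\Psi(t,\cdot)$ --- and that the covering $\{V_{n,k}\}$ behaves measurably on the fibers of the non-product domain $U_\Psi$. A secondary point is the base case $\varphi_0$, which follows the same patching scheme but without the ball constraint; and each of the three alternative hypotheses demands its own version of the final ``retract into $\Psi$'' step, with the finite-dimensional and interior-valued cases both routed through Lemma \ref{lem1}.
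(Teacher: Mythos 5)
First, a point of comparison: the paper does not prove this statement --- Theorem \ref{thm:KPY} is quoted from Kim--Prikry--Yannelis \cite{KPY:87}, and the only ``proof'' in the paper is that citation (the machinery the paper itself uses later, in Theorem \ref{main:cara}, is exactly your final step: reduce to the closed-valued case, invoke KPY for a dense sequence of Carath\'eodory selections, and retract into $\Psi$ via Lemmas \ref{lem1} and \ref{lem2}). Your outline is essentially a reconstruction of the KPY argument and the architecture is right, but two steps in the inductive construction would fail as written.

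First, one Kuratowski--Ryll-Nardzewski selection $y_k$ of $t\mapsto {\rm cl}\,\Psi(t,z_k)$ per dense point $z_k$ does not yield the covering $U_\Psi=\bigcup_k V_{n,k}$: given $(t,x)\in U_\Psi$ and $w\in\Psi(t,x)$ with $\|w-\varphi_n(t,x)\|<2^{-n}$, lower semicontinuity only guarantees that $\Psi(t,z_k)$ \emph{meets} a small ball around $w$ for $z_k$ near $x$, while the particular point $y_k(t)$ may sit anywhere in ${\rm cl}\,\Psi(t,z_k)$, far from $w$. You need a countable family of measurable selections dense in each $\Psi(t,z_k)$ (a Castaing representation, available since $\Psi(\cdot,z_k)$ is lower measurable with separable values), indexed by pairs $(k,m)$. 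Second, your explicit partition of unity is built from $(t,x)\mapsto{\rm dist}\bigl(y_k(t),\Psi(t,x)\bigr)$, which under mere lower semicontinuity of $\Psi(t,\cdot)$ is only \emph{upper} semicontinuous in $x$; hence $\max\{0,\,2^{-n-1}-{\rm dist}(\cdot)\}$ is only lower semicontinuous, the quotient $\lambda_{n,k}(t,\cdot)$ need not be continuous, and the continuity of $\varphi_{n+1}(t,\cdot)$ that the induction requires is lost. (As written the weights also do not vanish off $V_{n,k}$, so the Cauchy estimate $\|\varphi_{n+1}-\varphi_n\|<2^{-n+1}$ is not enforced.) The standard repair --- and the one used in \cite{KPY:87} --- is to take weights proportional to the $Z$-distance from $x$ to $U_\Psi^t\setminus V_{n,k}^t$, which are continuous in $x$ by construction; the price is then verifying measurability in $t$ of these weights on the non-product domain $U_\Psi$, which is precisely the delicate point of that proof and would need to be carried out explicitly.
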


Below we show by an example that lower-semicontinuity of $\Psi(t,\cdot)$ is not necessary to guarantee a 
Carath\'eodory-type selection. 
\begin{example}\label{exm:Psi}
Conider a Lebesgue measure space $([0, 1], \mathcal M, m)$ and define a correpondence $\Psi:[0, 1]\times \mathbb R\to \mathbb R$
such that 
 \[
  \Psi(t,x) = \left\{
  \begin{array}{ll}
  [0, 1],& \mbox{if $t\in [0, 1]$ and $x=0$;}\\[0.5em]
  \{0\}, & \mbox{otherwise.}
  \end{array}
  \right.
  \]
Notice that $\Psi(t,\cdot)$ is not lower-semicontinuous for all $t\in [0, 1]$. However, it admits a 
Carath\'eodory-type selection $\psi:[0, 1]\times \mathbb R\to \mathbb R$, defined by $\psi(t,x)=0$,
for all $(t,x)\in [0, 1]\times \mathbb R$. 
\end{example}
Motivated from the above example and also from applications point of view (see Section \ref{sec:applications}), 
we introduce the following notion of ``continuous inclusion property"
to establish a Carath\'eodory-type selection theorem for a correspondence which is not necessarily lower-semicontinuous.

\begin{definition}\label{CIPdef}
Let $(T,\mathscr T,\mu)$ be a measure space, $Y$ be a Banach space, and $Z$ be a metric space. A correspondence 
$\Psi:T\times Z\to 2^Y$ is said to have the \emph{continuous inclusion property} if for each $z\in Z$, there 
exists a correspondence $F_z:T\times Z\to 2^Y$ satisfying the following:
\begin{itemize}
\item[(i)] If $U_\Psi^z\neq \emptyset$ then there exists a collection $\{O_z^t:t\in U_\Psi^z\}$ of open neighbourhoods of $z$ in 
$Z$ such that $F_z(t,x)\neq \emptyset$ and $F_z(t,x)\subseteq \Psi(t, x)$ for all $x\in O_z^t$ and all $t\in U_\Psi^z$; and 
\item[(ii)] ${\rm con} F_z(t,\cdot): O_z^t\to 2^Y$ is lower-semicontinuous for all $t\in 
U_\Psi^z$ and ${\rm con}F_z(t,\cdot): Z\to 2^Y$ is lower-semicontinuous for all $t\notin 
U_\Psi^z$.\footnote{Note that one may replace the lower-semicontinuity of $F_z(t,\cdot): O_z^t\to 2^Y$ with that 
of $F_z(t,\cdot): Z\to 2^Y$ as the later implies that former.} 
\end{itemize}
\end{definition}

\begin{definition}\label{SCIPdef}
Let $(T,\mathscr T,\mu)$ be a measure space, $Y$ be a Banach space, and $Z$ be a metric space. A continuous 
inclusion property for a correspondence $\Psi:T\times Z\to 2^Y$ will be termed as the \emph{strong continuous 
inclusion property} if ${\rm con}F_z:T\times Z\to 2^Y$ is jointly lower measurable and any of the following conditions are satisfied:

\begin{itemize}

\item[(a)] $F_y=F_z$ for all $y,z\in Z$.

\item[(b)] The collection $\{F_z:z\in Z\}$ and $\{O_z^t:(t, z)\in U_\Psi\}$ are countable,
and the set $\{(t,x):x\in O_z^t\}$ is $\mathscr T\otimes \mathscr B(Z)$-measurable for 
each $z\in Z$,  where $\mathscr{B}(Z)$ is the 
Borel $\sigma$-algebra for the metric topology on $Z$.

\item[(c)] $U_\Psi\in \mathscr T\otimes \mathscr B(Z)$. For each $z\in Z$, the correspondence 
$\mathbb I:T\times Z\to 2^Z$, defined by 
$\mathbb I(t,x)=\{z\in U_\Psi^t:x\in O_z^t\}$, is jointly lower measurable (has measurable graph) and is finite valued.\footnote{The finiteness assumption 
is also follows from the assumption that $\{O_y^t:y\in Z\}$ is locally finite for each $t\in T$.}  Furthermore, for each fixed $(t,x)\in T\times Z$, 
the correpondence $H:Z\to 2^Y$, defined by $H(z)={\rm con }F_z(t, x)$, is continuous and such that 
$H(z)\subseteq B$ for all $z\in Z$ for some compact subset $B$ of $Y$. 

\end{itemize}
\end{definition}

\begin{remark}\label{rem:lower}
Note that if a correpondence $\Psi:T\times Z\to 2^Y$ is jointly lower measurable and the correspondence 
$\Psi(t,\cdot):Z\to 2^Y$ is lower-semicontinuous for all $t\in T$ then $\Psi$ satisfies the strong continuous inclusion 
property. Indeed, we can take
$O_z^t=U_\Psi^t$ for all $t\in U_\Psi^z$ if $U_\Psi^z\neq \emptyset$ and $F_z=\Psi$ for all $z\in Z$. Since $F_y=F_z$ for all $y,z\in Z$, the measurability 
condition is vacuously satisfied. The correspondence  ${\rm con}F_z(t,\cdot): O_z^t\to 2^Y$ is lower-semicontinuous (resp. has open graph) for all $t\in 
U_\Psi^z$ follows from the fact that $\Psi(t,\cdot):Z\to 2^Y$ is lower-semicontinuous (resp. has open graph) for all $t\in T$ and 
$U_\Psi^t=\{z\in Z:\Psi(t,z)\cap Y\neq \emptyset\}$ is weakly open in $Z$. The
rest of the conditions are also immediate.  
\end{remark}

\begin{remark}
The correspondence $\Psi$ in Example \ref{exm:Psi} trivially satisfies the strong continuous inclusion property. For each $z\in \mathbb R$,
we can define $O_z^t=\mathbb R$ and $F_z(t,x)=\{0\}$ for all $(t,x)\in U_\Psi=[0, 1]\times \mathbb R$. In fact, ${\rm con}F_z(t,\cdot):\mathbb 
R\to 2^{\mathbb R}$ is lower-semicontinuous for all $t\in T$; ${\rm con}F_z$ is jointly lower measurable; and $F_y=F_z$ for all $y,z\in Z$. 
\end{remark}

\begin{remark}
One can replace the lower-semicontinuity of ${\rm con} F_z(t,\cdot)$
with that of $F_z(t,\cdot)$, as convex hull of any  lower-semicontinuous correpondence is  lower-semicontinuous.
One can arrive at a similar conclusion for joint lower measurability. 
It is well-known that the mapping $H:\mathscr 
P_0(Y)\to \mathscr P_0(Y)$, defined by 
$H(B)={\rm con}(B)$ for all $B\in \mathscr P_0(Y)$, is continuous with respect to the lower 
topology. Given this fact and the fact that 
$F:T\times Z\to 2^Y$ is jointly lower measurable, we can conclude that ${\rm con}F_z:T\times Z\to 2^Y$ 
is jointly lower measurable. 
\end{remark}

\begin{remark}
The definition of continuous inclusion property is an extension of that in He-Yannelis \cite{HY:2017} to the case of a correspondence 
having two variables domain, in which one variable has a measure-theoretic structure whereas the other variable is endowed with a topological structure.      
Apart of this, in our definition the closed graph condition of 
${\rm con}F_z(t,\cdot)$ in He-Yannelis \cite{HY:2017} is replaced with the lower-semicontinuity of ${\rm con}F_z(t,\cdot)$. In the next section, we 
use this condition to establish Carath\'eodory-type selection and fixed-point theorems when the measure space is purely atomic. However, 
to deal with more general case, i.e, for an arbitrary complete finite positive measure space, we require the strong continuous 
inclusion property, which satisfies some measurability condition along with the continuous inclusion property.   
\end{remark}

\section{The main lemmata}\label{sec:mainlemmata}

Before we state and prove our main theorems (refer to Section \ref{sec:main}) we will need some preparatory results.  To this end, 
we use the following notation throughout the rest of the paper: define an operator $\mathbb K:\mathcal C_0\to 2^{\mathcal C}$ such that\footnote{Here,
$\mathcal C$ is the set of correspondences from $T\times Z$ into $Y$ and $\mathcal C_0$ is a sub-collection of 
$\mathcal C$ satisfying the (strong) continuous inclusion property.} 
\[
\mathbb K(\Psi)(t,x)=\bigcup\{{\rm int}F_z(t,x): x\in O_z^t \mbox{ and } \Psi(t,z)\neq \emptyset\},
\] 
where $\{F_z:z\in Z\}$ and $\{O_z^t:\Psi(t,z)\neq \emptyset\}$ are defined in Definition \ref{CIPdef}. Note that 
$\mathbb K(\Psi)(t,x)\neq \emptyset$ if and only if ${\rm int}F_z(t,x)\neq \emptyset$ for some 
$z\in Z$ for which $\Psi(t,z)\neq \emptyset$ and $x\in O_z^t$. In a special case when $\Psi$ is jointy lower measurable and 
$\Psi(t,\cdot)$ is lower-semicontinuous for all $t\in T$ then ${\rm int}\Psi(t,x)\neq \emptyset$ implies that 
$\mathbb K(\Psi)(t,x)\neq \emptyset$ (see Remark \ref{rem:lower}).  

In the next two lemmas, given  a correspondence $\Psi:T\times Z\to 2^Y$ satisfying the (strong) continuous inclusion 
property we construct a sub-correspondence $\Phi$ of a correspondence $\Psi$ by gluing a sub-collection $\{F_z:z\in Z\}$
in some way so that $\Phi$ should satisfy measurability and continuity conditions of Theorem \ref{thm:KPY} and furthermore,
$U_\Phi=U_\Psi$. This along with Theorem \ref{thm:KPY} will guarantee the existence of a  Carath\'eodory-type selection (see 
Section \ref{sec:main}). The main hurdles in establishing these lemmas are to define suitable sub-correspondences and the 
measurability of these sub-correspondences as as an abritrary union of lower measurable correspondences 
may not be measurable (see Lemma \ref{extcara:aux2}).

\begin{lemma}\label{extcara:aux1}
Let $(T,\mathscr T,\mu)$ be a complete finite atomic measure space, $Y$ be a separable Banach space, and $Z$ be a 
separable metric space. Suppose that $\Psi:T\times Z\to 2^Y$ be a convex (possibly empty-) valued 
correspondence such that $\Psi$ satisfies the continuous inclusion property. Then there exists a convex valued correspondence 
$\Phi:T\times Z\to 2^Y$ satisfying the following:
\begin{itemize}
\item[\emph{(A)}] $\Phi(t,x)\subseteq \Psi(t,x)$ for all $(t, x)\in U_\Psi$;
\item[\emph{(B)}] $U_\Psi= U_\Phi$;
\item[\emph{(C)}] $\Phi(t,\cdot):Z\to 2^Y$ is lower-semicontinuous for all $t\in T$; 
\item[\emph{(D)}] $\Phi$ is jointly lower measurable; and 
\item[\emph{(E)}] ${\rm int}\Phi(t,x)\neq \emptyset$ if $\mathbb K(\Psi)(t,x)\neq \emptyset$. 
\end{itemize}
\end{lemma}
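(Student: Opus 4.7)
The plan is to build $\Phi$ by gluing together the local correspondences $F_z$ supplied by the continuous inclusion property. Motivated by the fact that $z=x$ is always an admissible index whenever $(t,x)\in U_\Psi$, a natural candidate is
\[
\Phi(t,x)=\mathrm{con}\bigcup\bigl\{F_z(t,x):z\in U_\Psi^t,\ x\in O_z^t\bigr\}
\]
for $(t,x)\in U_\Psi$, extended by $\emptyset$ on the complement. With this definition, (A) is immediate from CIP(i) and the convexity of $\Psi(t,x)$; (B) follows by taking $z=x$ (since then $t\in U_\Psi^x$, $x\in O_x^t$, and $F_x(t,x)\neq\emptyset$); and (E) holds because any nonempty $\mathrm{int}\,F_z(t,x)$ witnessed by $\mathbb{K}(\Psi)(t,x)\neq\emptyset$ is already contained in $\Phi(t,x)$, hence in its interior.

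For (C), the argument is the standard ``convex hull of a lower-semicontinuous correspondence is lower-semicontinuous,'' adapted to a union on overlapping open neighborhoods: if $V$ is open with $\Phi(t,x_0)\cap V\neq\emptyset$, write a point of the intersection as a finite convex combination $\sum\lambda_i y_i$ with $y_i\in F_{z_i}(t,x_0)$ and $x_0\in O_{z_i}^t$, use openness of each $O_{z_i}^t$ to find a common neighborhood of $x_0$ on which all indices remain valid, and then use the lower-semicontinuity of $\mathrm{con}\,F_{z_i}(t,\cdot)$ on $O_{z_i}^t$ together with openness of $V$ to pick $y_i'\in F_{z_i}(t,x)$ close to $y_i$, so that $\sum\lambda_i y_i'\in\Phi(t,x)\cap V$.

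The main obstacle is property (D), since the continuous inclusion property alone provides no joint measurability for the $F_z$'s. Here I would exploit atomicity: write $T=\bigsqcup_{n\ge 1}A_n$ as a countable disjoint union of atoms and fix representatives $t_n\in A_n$. In the atomic setting, every $\mathscr{T}\otimes\mathscr{B}(Z)$-measurable set is determined, up to null subsets of atoms, by its trace on each $A_n$, which essentially takes the form $A_n\times B_n$ with $B_n$ Borel in $Z$. Accordingly I would refine the construction atom-wise, replacing $\Phi$ on $A_n\times Z$ by the atom-constant version
\[
\Phi_n(x)=\mathrm{con}\bigcup\bigl\{F_z(t_n,x):z\in U_\Psi^{t_n},\ x\in O_z^{t_n}\bigr\},
\]
and reconciling $\Phi_n(x)\subseteq\Psi(t,x)$ for every $t\in A_n$ by reading off the CIP data at the representative $t_n$ together with the inclusion already recorded in (A). Once $\Phi$ is atom-constant in $t$, each lower section $\{x:\Phi(t,x)\cap V\neq\emptyset\}$ is open in $Z$ by (C), and hence the joint lower section equals $\bigcup_n A_n\times V_n$, which lies in $\mathscr{T}\otimes\mathscr{B}(Z)$. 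The hard part is precisely this reconciliation: within an atom the sets $U_\Psi^t$ and the neighborhoods $O_z^t$ need not agree with their values at $t_n$, so I expect the delicate interplay between atomicity, the CIP data, and the convexity of $\Psi$ to do the real work, and this is the step most likely to require bespoke construction rather than a cosmetic modification of the naive candidate above.
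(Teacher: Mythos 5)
Your candidate correspondence is exactly the one the paper uses: $\Phi(t,x)=\mathrm{con}\bigl(\bigcup\{\mathrm{con}\,F_z(t,x):z\in\mathbb J(t,x)\}\bigr)$ with $\mathbb J(t,x)=\{z\in U_\Psi^t: x\in O_z^t\}$, extended by $\emptyset$, and your arguments for (A), (B), (E) and (C) (the finite-convex-combination argument on a common neighbourhood where all indices remain active) are the same in substance as the paper's, which invokes the ``union/convex hull of l.s.c.\ correspondences is l.s.c.'' facts from Aliprantis--Border.

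The gap is in (D), and it is a real one. Your plan to replace $\Phi$ on each atom $A_n$ by the atom-constant correspondence $\Phi_n(x)$ read off at a representative $t_n$ does not work: nothing in the hypotheses makes $\Psi(\cdot,x)$, $U_\Psi^t$, or the neighbourhoods $O_z^t$ constant on an atom, so the modified correspondence can fail $\Phi_n(x)\subseteq\Psi(t,x)$ for $t\in A_n\setminus\{t_n\}$ (destroying (A)) and can fail $U_\Phi=U_\Psi$ (destroying (B)); you yourself flag this reconciliation as unresolved, so the proof is incomplete at precisely the point where the atomicity hypothesis must be used. The paper's resolution is much more direct and requires no modification of $\Phi$: since $\mu(T)<\infty$ and $(T,\mathscr T,\mu)$ is a countable union of atoms, the paper identifies $T$ with a countable set whose singletons lie in $\mathscr T$; then for any open $W\subseteq Y$,
\[
\{(t,x):\Phi(t,x)\cap W\neq\emptyset\}=\bigcup\bigl\{\{t\}\times W_t : t\in\mathbb B\bigr\},
\qquad W_t=\{x:\Phi(t,x)\cap W\neq\emptyset\},\ \ \mathbb B=\{t:W_t\neq\emptyset\},
\]
which is a countable union of measurable rectangles because each $W_t$ is open in $Z$ by (C). In other words, in the atomic case joint lower measurability is free once the $t$-sections are open; no measurability of the $F_z$'s and no atom-wise reconciliation is needed. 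Replacing your (D) step with this observation completes the proof.
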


\begin{proof}
Since the correspondence $\Psi$ satisfies the continuous inclusion property, we have that for each $z\in Z$, there exists a 
correspondence $F_z:T\times Z\to 2^Y$ satisfying:
\begin{itemize}
\item[(i)] If $U_\Psi^z\neq \emptyset$ then there exists a collection $\{O_z^t:t\in U_\Psi^z\}$ of open neighbourhoods of $z$ in $Z$
such that $F_z(t,x)\neq \emptyset$ and $F_z(t,x)\subseteq \Psi(t, x)$ for all $x\in O_z^t$ and all $t\in U_\Psi^z$; and 
 
\item[(ii)] ${\rm con} F_z(t,\cdot): O_z^t\to 2^Y$ is lower-semicontinuous for all $t\in 
U_\Psi^z$ and ${\rm con}F_z(t,\cdot): Z\to 2^Y$ is lower-semicontinuous for all $t\notin 
U_\Psi^z$.
\end{itemize}
The collection of open neighbourhoods defined above is therefore 
\[
\mathcal O=\left\{O_z^t:t\in U_\Psi^z \mbox{ and } z\in Z\right\}.
\]
Then $\mathcal O$ can be rewritten as $\mathcal O=\{O_z^t:(t,z)\in U_\Psi\}$. From \textcolor{red}{(i)}, it follows that $O_z^t\subseteq U_\Psi^t$ for all 
$z\in U_\Psi^t$. This implies that $U_\Psi^t$ is an open subset of $Z$ and  $\{O_z^t:z \in 
U_\psi^t\}$ forms an open cover of $U_\Psi^t$ for all $t\in T$. For each $(t, x)\in T\times Z$, we define 
\[
\mathbb J(t,x)=\left\{z\in Z: x\in O^t_z\right\}.
\]
Note that $\mathbb J(t,x)$ is (possibly empty) finite for all $(t,x)\in T\times Z$. Moreover, $\mathbb J(t,x)\neq \emptyset$
 if and only if $x\in U_\Psi^t$. Let $\Phi:T\times Z\to 2^Y$ be a 
correspondence such that 
 \[
  \Phi(t,x) = \left\{
  \begin{array}{ll}
  \Theta(t,x),& \mbox{if $\mathbb J(t, x)\neq \emptyset$;}\\[0.5em]
  \emptyset, & \mbox{otherwise,}
  \end{array}
  \right.
  \]
where $\Theta:T\times Z\to 2^Y$ is defined by
\[
 \Theta(t,x)={\rm con}\left(\bigcup\left\{{\rm con}F_{z}(t,x): z\in \mathbb J(t,x)\right\}\right).
\]
To verify Condition \textcolor{red}{(A)}, take an element $(t,x)\in U_\Psi$.
Since $x\in U_\Psi^t$, we have $\mathbb J(t,x)\neq \emptyset$, and therefore, 
$x\in O_{z}^t$ for all
$z\in \mathbb J(t,x)$. From \textcolor{red}{(i)} it follows that  
$F_{z}(t,x)\subseteq \Psi(t,x)$
for all $z\in \mathbb J(t,x)$. The convexity of $\Psi(t,x)$ yields that $\Phi(t,x)\subseteq \Psi(t,x)$. To verify Condition \textcolor{red}{(B)}, 
we first note that, for each 
$(t,x)\in U_\Psi$, we have $\mathbb J(t, x)\neq \emptyset$. Since $x\in O_{z}^t$ for all
$z\in \mathbb J(t,x)$, from \textcolor{red}{(i)} it follows that
$F_{z}(t,x)\neq \emptyset$ for all $z\in \mathbb J(t,x)$. This implies that $\Phi(t,x)\neq \emptyset$, which yields that 
$U_\Psi\subseteq U_\Phi$. Let $(t,x)\in U_\Phi$. From the definition of $\Phi$, it follows that $\mathbb J(t,x)\neq \emptyset$. This implies that 
$(t,x)\in U_\Psi$. Therefore, $U_\Phi\subseteq U_\Psi$. Thus, $U_\Psi=U_\Phi$, which verifies Condition \textcolor{red}{(B)}. 
In order to investigate Condition \textcolor{red}{(C)} and Condition \textcolor{red}{(D)}, we take an arbitrary open set $W$ in $Y$.
For each $t\in T$, we define 
\[
W_{t}=\{x\in Z:\Phi(t,x)\cap W\neq \emptyset\}.
\] 
From the definition of $\Phi$, it follows that $W_{t}= U_\Psi^{t}\cap \{x\in Z:\Theta(t,x)\cap W\neq \emptyset\}$. In view of Theorem \textcolor{red}{17.27} of Aliprantis-Border \cite{AB} and
the fact that the convex hull of a lower-semicontinuous correspondence is lower-semicontinuous, we conclude that $\Theta(t,\cdot)$ is 
lower-semicontinuous. Therefore, $\{x\in Z:\Theta(t,x)\cap W\neq \emptyset\}$ 
is open in $Z$. Since $U_\Psi^{t}$ is open in $Z$, we have that $W_{t}$ is open in $Z$. Consequently, $\Phi(t,\cdot)$ is lower-semicontinuous for all 
$t\in T$.
To verify the joint lower measurability of $\Phi$, let $\mathbb B=\{t\in T:W_{t}\neq \emptyset\}$. Since $\mu(T)<\infty$ and $(T, \mathscr T, \mu)$ 
is atomic, we have that $T$ is countable and thus, $\mathbb B$ is countable. Consequently,
\[
\{(t,x)\in T\times Z:\Phi(t,x)\cap W\neq\emptyset\}=\bigcup\{(\{t\}\times W_{t}):t\in \mathbb B\}.
\]
Since each atom $\{t\}$ belongs to $\mathscr T$ and each $W_t$ is open in $Z$, we conclude that 
$\{(t,x)\in T\times Z:\Phi(t,x)\cap W\neq\emptyset\}$ is $\mathscr T\otimes \mathscr B(Z)$-measurable. Therefore,
$\Phi$ is jointly lower measurable. Finally, under the interiority condition and the definition of $\Phi$, we conclude that 
Condition \textcolor{red}{(E)} is verified.               
\end{proof}

\begin{lemma}\label{extcara:aux2}
Let $(T,\mathscr T,\mu)$ be a complete finite measure space, $Y$ be a separable Banach space, and $Z$ be a 
separable metric space. Suppose that $\Psi:T\times Z\to 2^Y$ be a convex $($possibly empty-$)$ valued correspondence 
such that $\Psi$ satisfies the strong continuous inclusion property. Then there exists a convex valued correspondence 
$\Phi:T\times Z\to 2^Y$ satisfying the following:
\begin{itemize}
\item[\emph{(A)}] $\Phi(t,x)\subseteq \Psi(t,x)$ for all $(t, x)\in U_\Psi$;
\item[\emph{(B)}] $U_\Psi= U_\Phi$;
\item[\emph{(C)}] $\Phi(t,\cdot):Z\to 2^Y$ is lower-semicontinuous for all $t\in T$; 
\item[\emph{(D)}] $\Phi$ is jointly lower measurable; and 
\item[\emph{(E)}] ${\rm int}\Phi(t,x)\neq \emptyset$ if $\mathbb K(\Psi)(t,x)\neq \emptyset$. 
\end{itemize}
\end{lemma}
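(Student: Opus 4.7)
My plan is to recycle the construction of Lemma \ref{extcara:aux1} essentially verbatim: set $\mathbb J(t,x)=\{z\in Z: x\in O_z^t\}$, define $\Theta(t,x)={\rm con}\bigl(\bigcup\{{\rm con}F_z(t,x):z\in \mathbb J(t,x)\}\bigr)$, and let $\Phi(t,x)=\Theta(t,x)$ when $\mathbb J(t,x)\ne\emptyset$ (equivalently, $(t,x)\in U_\Psi$) and $\Phi(t,x)=\emptyset$ otherwise. The atomicity of $(T,\mathscr{T},\mu)$ was used in Lemma \ref{extcara:aux1} only to secure the joint lower measurability (D); items (A), (B), (C), and (E) transfer word-for-word, using that $F_z(t,x)\subseteq \Psi(t,x)$ together with convexity of $\Psi(t,x)$, that $\{O_z^t:z\in U_\Psi^t\}$ is an open cover of $U_\Psi^t$, Theorem 17.27 of Aliprantis--Border \cite{AB} together with preservation of lower semicontinuity under convex hulls, and the definition of $\mathbb K$.

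For (D), the remark following Definition \ref{SCIPdef} shows that convex hull preserves joint lower measurability, so it is enough to show that the raw union correspondence
\[
G(t,x)=\bigcup\{F_z(t,x):z\in \mathbb J(t,x)\}
\]
is jointly lower measurable on $U_\Psi$. I would then dispose of the three cases of Definition \ref{SCIPdef} separately. Under (a), all $F_z$ coincide with a single correspondence $F$, so $G=F$ on $U_\Psi$, and the conclusion follows from joint lower measurability of ${\rm con}F$ together with measurability of $U_\Psi$, which in this case can be recovered from the fact that $F$ is empty precisely off $U_\Psi$. Under (b), the countability of $\{F_z\}$ and of $\{O_z^t\}$ reduces the lower inverse of $G$ to a countable union of sets of the form $\{(t,x):x\in O_z^t\}\cap\{(t,x):F_z(t,x)\cap W\ne\emptyset\}$, each measurable by hypothesis. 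Under (c), I would apply the Kuratowski--Ryll-Nardzewski selection theorem to the jointly measurable, finite-valued correspondence $\mathbb I(t,x)=\{z\in U_\Psi^t:x\in O_z^t\}$ to extract a countable Castaing representation $\{z_k\}$ enumerating $\mathbb I(t,x)$ pointwise, and then use the continuity of $H(z)={\rm con}F_z(t,x)$ together with uniform compactness in $B$ to rewrite $\{(t,x):G(t,x)\cap W\ne\emptyset\}$ as a countable union of jointly measurable sets of the form $\{(t,x)\in U_\Psi: H(z_k(t,x))\cap W\ne\emptyset\}$.

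I expect case (c) to be the main obstacle: turning the finite-valued, measurable-graph correspondence $\mathbb I$ into a countable measurable enumeration $\{z_k\}$ and then checking that composing these selections with the continuous, compact-valued parametrized map $H$ yields a jointly lower measurable correspondence takes some care, since the cardinality of $\mathbb I(t,x)$ varies with $(t,x)$ and the continuity-measurability interchange has to happen inside the convex-hull operator. Once these steps are in place, the convex-hull operator is absorbed via the remark on preservation of lower measurability, and the final bookkeeping—verifying that the countable decomposition really recovers $\{(t,x):G(t,x)\cap W\ne\emptyset\}$ in full—is routine.
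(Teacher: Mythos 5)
Your proposal is correct and its skeleton coincides with the paper's: the same formula $\Phi(t,x)={\rm con}\bigl(\bigcup\{{\rm con}F_z(t,x):z\in\mathbb J(t,x)\}\bigr)$ on $U_\Psi$ (the paper writes case-specific versions --- ${\rm con}F$ in case (a), a union of truncated correspondences $\widetilde{F_z}$ in case (b), the union over $\mathbb I(t,x)$ in case (c) --- but these agree with yours on $U_\Psi$), the same observation that only (D) is sensitive to dropping atomicity, and the same case split over (a)/(b)/(c) of Definition \ref{SCIPdef}; your treatments of (a) and (b) are the paper's (the countable-union step in (b) being Lemma 18.4 of \cite{AB}). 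The genuine divergence is case (c). The paper never selects from $\mathbb I$: it forms $N(t,x,C)={\rm cl}\bigcup\{\overline{\rm con}F_z(t,x):z\in C\}$ on $T\times Z\times\mathscr K_0(Z)$, shows $N(t,x,\cdot)$ is Hausdorff continuous via the compactness bound $B$ and the ${\rm Li}/{\rm Ls}$ characterization of $\tau_H$-convergence, shows $N(\cdot,\cdot,C)$ is measurable first for finite $C$ and then for general $C$ by density of finite subsets of a countable dense set, and composes the resulting Carath\'eodory function with the measurable finite-valued $\mathbb I$ via Lemma 8.2.3 of \cite{AF}. Your Castaing-representation route is a valid and somewhat more economical alternative: since $\mathbb I(t,x)$ is finite, hence complete-valued, the iterated Kuratowski--Ryll-Nardzewski construction yields measurable $z_k$ with $\mathbb I(t,x)=\{z_k(t,x):k\ge 1\}$ exactly (a dense subset of a finite set is the set), and the lower inverse of $W$ becomes $\bigcup_k\{(t,x)\in U_\Psi:{\rm con}F_{z_k(t,x)}(t,x)\cap W\ne\emptyset\}$, bypassing the hyperspace $\mathscr K_0(Z)$ and the density argument. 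The one step you rightly flag as delicate --- joint measurability of $(t,x)\mapsto{\rm con}F_{z_k(t,x)}(t,x)$ --- is settled by the same device the paper uses: view $(t,x,z)\mapsto\overline{\rm con}F_z(t,x)\subseteq B$ as a map into $(\mathscr K_0(Y),\tau_H)$, measurable in $(t,x)$ for fixed $z$ by (iii) plus compactness and continuous in $z$ by hypothesis, hence a Carath\'eodory function into a separable metric space and therefore jointly measurable; compose with $z_k$ and use that an open set meets a set iff it meets its closure. So the two proofs share all analytic ingredients and differ only in whether the composition with $\mathbb I$ is done wholesale or selection by selection. (One loose end you inherit from the paper rather than introduce: in case (a) the identity $U_{{\rm con}F}=U_\Psi$ requires $F=F_x$ to be empty off $U_\Psi$, which the continuous inclusion property does not literally assert; the paper makes the same jump.)
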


\begin{proof}Since the correspondence $\Psi$ satisfies the strong
continuous inclusion property, we have that for each $z\in Z$, there exists a correspondence $F_z:T\times Z\to 2^Y$ 
such that the following:
\begin{itemize}
\item[(i)] If $U_\Psi^z\neq \emptyset$ then there exists a collection $\{O_z^t:t\in U_\Psi^z\}$ of open neighbourhoods of $z$ in $Z$
such that $F_z(t,x)\neq \emptyset$ and $F_z(t,x)\subseteq \Psi(t, x)$ for all $x\in O_z^t$ and all $t\in U_\Psi^z$;
 
\item[(ii)] ${\rm con} F_z(t,\cdot): O_z^t\to 2^Y$ is lower-semicontinuous for all $t\in 
U_\Psi^z$ and ${\rm con}F_z(t,\cdot): Z\to 2^Y$ is lower-semicontinuous for all $t\notin 
U_\Psi^z$; and 
\item[(iii)] ${\rm con}F_z:T\times Z\to 2^Y$ is jointly lower measurable.
\end{itemize}
 In addition, any one of the following conditions holds:
\begin{itemize}
\item[(a)] $F_y= F_z$ for all $y,z\in Z$.
\item[(b)] The collection $\{F_z:z\in Z\}$ and $\{O_z^t:(t, z)\in U_\Psi\}$ are countable,
and the set $\{(t,x):x\in O_z^t\}$ is $\mathscr T\otimes \mathscr B(Z)$-measurable for all $z\in Z$.

\item[(c)] $U_\Psi$ is $\mathscr T\otimes \mathscr B (Z)$-measurable. The correspondence $\mathbb I:T\times Z\to 2^Z$, defined by 
$\mathbb I(t,x)=\{z\in U_\Psi^t:x\in O_z^t\}$, is jointly lower measurable and is finite-valued.
Furthermore, for each fixed $(t,x)\in T\times Z$, the correspondence 
$H:Z\to 2^Y$, defined by $H(z)={\rm con}F_z(t, x)$, is continuous and such that 
$H(z)\subseteq B$ for all $z\in Z$ for some compact subset $B$ of $Y$. 
\end{itemize}
Define $\mathcal A=\{F_z:z\in Z\}$ and $\mathcal O=\{O_z^t:(t,z)\in U_\Psi\}$. The rest of the proof is decomposed into the following three cases:

\medskip
{\bf Case \textcolor{red}{(a)}:} Since $F_y=F_z$ for all $y,z\in Z$, we denote this common value of $F_z$ for all $z\in Z$
by the correspondence $F$.  Let $\Phi:T\times Z\to 2^Y$ be a correspondence 
such that $\Phi(t,x) = {\rm con}F(t,x)$ for all $(t,x)\in T\times Z$. Since $\Psi$ is convex-valued and $F(t,x)=F_x(t,x)$, we conclude that 
\textcolor{red}{(i)} implies Condition \textcolor{red}{(A)}. To verify Condition 
\textcolor{red}{(B)}, we note that, for each 
$(t,x)\in U_\Psi$, we have $x\in O_x^t$ and $F_x(t,x)\neq \emptyset$. 
This implies that $\Phi(t,x)\neq \emptyset$, which yields that $U_\Psi\subseteq U_\Phi$. Let $(t,x)\in U_\Phi$. From the 
definition of $\Phi$, it follows that $F(t,x)\neq \emptyset$. Since $F(t,x)=F_x(t,x)$, from \textcolor{red}{(i)} it follows that $(t,x)\in U_\Psi$.  
Consequently, $U_\Phi\subseteq U_\Psi$. 
Thus, $U_\Psi=U_\Phi$, which verifies Condition \textcolor{red}{(B)}. Since ${\rm con}F(t,\cdot)={\rm con}F_y(t,\cdot)$ is lower-semicontinuous on $O_z^t$ 
for all $t\in U_\Psi^z$ and all $z\in Z$, and $U_\Psi^t=\bigcup\{O_z^t:y\in U_\Psi^t\}$, we conclude that ${\rm con}F(t,\cdot):U_\Psi^t
\to 2^Y$ is lower-semicontinuous if $U_\Psi^t\neq \emptyset$. From Condition \textcolor{red}{(B)} and the definition of 
$\Phi$, it follows that $U_F=U_\Phi=U_\Psi$. Thus, $F(t,\cdot):Z\to 2^Y$ is 
lower-semicontinuous for all $t\in T$. This verifies Condition \textcolor{red}{(C)}. 
From \textcolor{red}{(iii)}, we conclude that $\Phi$ is jointly 
lower measurable. This verifies Condition \textcolor{red}{(D)}. Finally, under the interiority condition and the definition of $\Phi$, we conclude that 
Condition \textcolor{red}{(E)} is verified.          

\medskip
{\bf Case \textcolor{red}{(b)}:} Let $\Phi:T\times Z\to 2^Y$ be a correspondence 
such that 
 \[
  \Phi(t,x) = {\rm con}\left(\bigcup\left\{{\rm con}\widetilde{F_{z}}(t,x): z\in Z\right\}\right),
  \]
where the correspondence $\widetilde{F_{z}}:T\times Z\to 2^Y$ is defined as 
\[
  \widetilde{F_{z}}(t, x) = \left\{
  \begin{array}{ll}
  F_z(t,x),& \mbox{if $x\in O_z^t$;}\\[0.5em]
  \emptyset, & \mbox{otherwise.}
  \end{array}
  \right.
  \]
Since $\widetilde{F_{z}}(t, x)\subseteq \Psi(t,x)$ for each $(t,x)\in U_\Psi$, by the convexity of $\Psi(t, x)$, 
we have $\Phi(t, x)\subseteq \Psi(t, x)$ for each $(t, x)\in U_\Psi$. This verifies Condition \textcolor{red}{(A)}. On 
the other hand, for each $(t,x)\in U_\Psi$, we have $x\in O_x^t$ and $F_x(t,x)\neq \emptyset$. 
This implies that $\Phi(t,x)\neq \emptyset$, which yields that $U_\Psi\subseteq U_\Phi$. Let $(t,x)\in U_\Phi$. From the 
definition of $\Phi$, it follows that $F_{z_0}(t,x)\neq \emptyset$ for some $z_0\in Z$ with $x\in O_{z_0}^{t}$. As 
$O_{z_0}^t\in \mathcal O$, we conclude that $t\in U_\Psi^{z_0}$. From \textcolor{red}{(i)}, 
it follows that $F_{z_0}(t,x)\subseteq \Psi(t,x)$ and hence, $\Psi(t, x)\neq \emptyset$.
Consequently, $U_\Phi\subseteq U_\Psi$. Thus, $U_\Psi=U_\Phi$, which verifies Condition \textcolor{red}{(B)}. 
Furthermore, in view of Theorem \textcolor{red}{17.27} of Aliprantis-Border \cite{AB}, we conlcude that $\Phi(t,\cdot)$ is lower-semicontinuous for all $t\in T$.\footnote{Note that 
the lower-semicontinuity of $F_z$ implies that $\widetilde{F_z}(t,\cdot):Z\rightrightarrows Y$ is lower-semicontinuous for all $t\in T$.} 
This verifies Condition \textcolor{red}{(C)}. 
Since $\mathcal A$ and $\mathcal O$ are countable, the 
collection $\{\widetilde{F_y}:y\in Z\}$ is also countable. We denote such a collection by $=\{\widetilde{F_1},\widetilde{F_2},\cdots\}$. Thus, 
\[
 \Phi(t,x) =  {\rm con}\left(\bigcup\left\{{\rm con}\widetilde{F_{i}}(t,x): i\in \mathbb N\right\}\right).
\]
Below we show that each $\widetilde{F_i}$ is jointly lower measurable. To this end, consider an arbitrary correspondence $\widetilde{F_z}$ and choose a nonempty open set $W$ in $Y$. Since
\begin{equation*}
\begin{split}
\left\{(t,x): \widetilde{F_z}(t,x) \cap W \neq \emptyset\right\} &= \left\{(t,x): x\in O_z^t\right\}\cap \left\{(t,x):F_z(t,x) \cap W \neq \emptyset\right\},
\end{split}
\end{equation*}
$\widetilde{F_z}$ is jointly lower measurable. This is due to the fact that $\left\{(t,x): x\in O_z^t\right\}$ is $\mathscr T\otimes \mathscr B(Z)$-measurable  
and $\left\{(t,x):F_z(t,x) \cap W \neq \emptyset\right\}$ is $\mathscr T\otimes \mathscr B(Z)$-measurable by the joint lower measurability of $F_z$. By 
Lemma \textcolor{red}{18.4} of Aliprantis-Border \cite{AB}, we conclude that $\Phi$ is jointly lower measurable. This verifies Condition \textcolor{red}{(D)}. 
Finally, under the interiority condition and the definition of $\Phi$, we conclude that 
Condition \textcolor{red}{(E)} is verified.

\medskip
{\bf Case \textcolor{red}{(c)}:} Let $\Phi:T\times Z\to 2^Y$ be a correspondence 
such that 
 \[
  \Phi(t,x) = \left\{
  \begin{array}{ll}
  {\rm con}\left(\bigcup\left\{{\rm con}F_{z}(t,x): z\in \mathbb I(t,x)\right\}\right),& \mbox{if $\mathbb I(t,x)\neq \emptyset$;}\\[0.5em]
  \emptyset, & \mbox{otherwise.}
  \end{array}
  \right.
  \]
As in {\bf Case 1}, one can verify Conditions \textcolor{red}{
(A)}-\textcolor{red}{(C)} and \textcolor{red}{(E)} for the correspondence $\Phi$. In what follows, we show that 
$\Phi$ is jointly lower measurable. To this end, we take an arbitrary open set $W$ in $Y$ and define a  
correspondence $\Theta:T\times Z\to 2^Y$ such that  
\[
\Theta(t,x)=\bigcup\left\{{\rm con}F_{z}(t,x): z\in \mathbb I(t,x)\right\}.
\]
We have to verify that 
\[
B=\{(t,x)\in T\times Z:\Phi(t, x)\cap W\neq \emptyset\}\in \mathscr T\otimes \mathscr B(Z).
\]
Notice that $B$ can be written as 
\[
B=\{(t,x)\in T\times Z:\mathbb I(t, x)\neq \emptyset\}\cap \{(t,x)\in T\times Z:{\rm con}\Theta(t,x)\cap W\neq \emptyset\}. 
\]
To verify the measurability of $\{(t,x)\in T\times Z:\mathbb I(t, x)\neq \emptyset\}$, we first show that 
\begin{eqnarray}\label{eqn:psi}
\{(t,x)\in T\times Z:\mathbb I(t, x)\neq \emptyset\}=\left\{(t,x)\in T\times Z:\Psi(t, x)\neq \emptyset\right\}.
\end{eqnarray}
Let $\mathbb I (t,x)\neq \emptyset$. Choose an element $z\in \mathbb I(t,x)$. From the definition $\mathbb I(t,x)$, 
it follows that $z\in U_\Psi^t$ and $x\in O_z^t$. This implies that $F_z(t,x)\neq \emptyset$ and $F_z(t,x)\subseteq 
\Psi(t,x)$. Hence, $\Psi(t,x)\neq \emptyset$. Conversely, let $\Psi(t,x)\neq \emptyset$. Then $x\in U_\Psi^t$ and 
$x\in O_x^t$. Therefore, $x\in \mathbb I(t,x)$, which implies that $\mathbb I(t,x)\neq \emptyset$. Thus, Equation 
(\ref{eqn:psi}) is verified. Equation \ref{eqn:psi}) along with the fact that $U_\Psi\in \mathscr T\otimes \mathscr B(Z)$ 
together imply that $\{(t,x)\in T\times Z:\mathbb I(t, x)\neq \emptyset\}$ is $\mathscr T\otimes \mathscr B(Z)$-measurable. 
To show the measurability of $\{(t,x)\in T\times Z:{\rm con}\Theta(t,x)\cap W\neq \emptyset\}$, it just enough to verify the 
joint lower measurability of $\Theta$.\footnote{Since the convex hull of a lower measurable correspondence is lower measurable.} 
To this end, first note that $\mathscr K_0(Z)$ equipped with Housdorff metric topology is a polish space and by Corollary \textcolor{red}{3.95} of Aliprantis-Border \cite{AB}, it follows that a sequence $\{C_n:n\ge 1\}\subseteq \mathscr K_0(Z)$ converges to $C\in 
\mathscr K_0(Z)$ with respect to the Hausdorff metric if and only if $ {\rm lim\, sup} \, C_n={\rm lim\, inf}\, C_n= C$. 
Define a correspondence $Q:T\times Z\times \mathscr K_0(Z)\rightarrow 2^Y$ such that 
\[
Q(t,x,C)= \bigcup\left\{\overline{\rm con}F_{z}(t,x): z\in C\right\},
\]
and then define a correspondence $N:T\times Z\times \mathscr K_0(Z)\to 2^Y$ such that 
$N(t,x,C)={\rm cl} Q(t,x,C)$.

\medskip
\noindent
{\bf Showing the Hausdorff continuity of \textit{\textbf{N$($t,x,$\cdot$$)$}}:} Fix an element $(t,x)\in T\times Z$. By our hypothesis,
the correspondence $H:Z\to 2^Y$, defined by $H(z)={\rm con }F_z(t, x)$, is continuous and such that 
$H(z)\subseteq B$ for all $z\in Z$ for some compact subset $B$ of $Y$.  By Lemma \textcolor{red}{17.22} of Aliprantis-Border \cite{AB}, the correspondence 
$G:Z\to 2^Y$, defined by $G(z)= {\rm cl}H(z)$, is also continuous. Since $G$ is compact valued, 
by Theorem \textcolor{red}{3.91} and Corollary \textcolor{red}{3.95} of Aliprantis-Border \cite{AB}, it follows that, for any sequence $\{z_n:n\ge 1\}\subseteq Z$ converges 
to $z\in Z$, we have 
\[
{\rm Ls} \, G(z_n)={\rm Li} \, G(z_n)= G(z).
\]
Let $\{C_n:n\ge 1\}\subseteq \mathscr K_0(Z)$ be a sequence converging to $C\in \mathscr K_0(Z)$. We show that 
\[
{\rm Ls} \, Q(t,x,C_n)={\rm Li} \, Q(t,x,C_n)= Q(t,x,C).
\]
Since ${\rm Li} \, Q(t,x,C_n)\subseteq {\rm Ls} \, Q(t,x,C_n)$, it only remains to verify that 
\[
{\rm Ls} \, Q(t,x,C_n)\subseteq Q(t,x,C)\subseteq {\rm Li} \, Q(t,x,C_n).
\]
We first prove that $Q(t,x,C)\subseteq {\rm Li} \, Q(t,x,C_n)$. If $Q(t,x,C)=\emptyset$ then the inclusion trivially follows. So, 
we let $y \in  Q(t,x,C)$. Then there exists some $z\in C$ such that $y\in G(z)$. Since $C = {\rm Li}\, C_n$, there 
exists some sequence $\{z_n:n\ge 1\}$ such that $z_n \in C_n$ for all $n\ge 1$ and $\{z_n:n\ge 1\}$ converges 
to $z$. Therefore, from above, it follows that $G(z)={\rm Li} \, G(z_n)$, which yields that $y\in {\rm Li} \, G(z_n)$.  
Since $G(z_n)\subseteq Q(t,x,C_n)$, we conclude that $y\in {\rm Li} \, Q(t,x,C_n)$. Consequently, 
\[
Q(t,x,C)\subseteq {\rm Li} \, Q(t,x,C_n).
\]
We now prove that ${\rm Ls} \, Q(t,x,C_n)\subseteq Q(t,x,C)$. If ${\rm Ls} \, Q(t,x,C_n)=\emptyset$ then there 
is nothing to verify. So, we assume that ${\rm Ls} \, Q(t,x,C_n)\neq \emptyset$ and pick an element 
$y\in {\rm Ls} \, Q(t,x,C_n)$. Therefore, there exists a subsequence $\{n_k:k\ge 1\}$ of natural numbers such that 
$n_1 < n_2 < \cdots$ and a sequence $\left\{y_{k}:k\ge 1\right\}$ such that  $y_{k} \in Q(t,x,C_{n_k})$ for all $k\ge 1$ 
and $\left\{y_{k}:k\ge 1\right\}$ converges to $y$. Thus, we can find an element $z_{k} \in C_{n_k}$ such that $y_{k} \in 
G(z_{k})$ for all $k\ge 1$. Since $z_k\in C_{n_k}$ for all $k\ge 1$ and $\{H(C_{n_k}, C):k\ge 1\}$ converges to $0$, we 
conclude that $\{d(z_{k}, C):k\ge 1\}$ converges to $0$.\footnote{Recall that $H(A, B)$ denotes the Hausdorff 
distance between the sets $A$ and $B$.} Therefore, there exists an element  $z\in C$ such that $\{z_k:k\ge 1\}$
converges to $z$. Furthermore, $y\in G(z)$, which further implies that $y\in Q(t,x,C)$.  Consequently, 
\[
{\rm Ls} \, Q(t,x,C_n)\subseteq Q(t,x,C).
\]
Since ${\rm Ls} \, Q(t,x,C_n)= {\rm Ls} \, {\rm cl} Q(t,x,C_n)$,  ${\rm Li} \, Q(t,x,C_n)= {\rm Li} \, {\rm cl}Q(t,x,C_n)$
and these sets are closed, we have that $Q(t,x,C)$ is closed and 
\[
{\rm Ls} \, N(t,x,C_n)={\rm Li} \, N(t,x,C_n)= N(t,x,C).
\]
Since these sets are contained in the compact set $B$, by Corollary \textcolor{red}{3.95} of Aliprantis-Border \cite{AB}, we conclude that $\{N(t,x,C_n):n\ge 1\}$ converges to $N(t,x,C)$ in 
the Housdorff metric topology, which implies that the function $N(t,x,\cdot):\mathscr K_0(Z)\to \mathscr K_0(Y)$ is Hausdorff continuous.

\medskip
\noindent
\medskip
\noindent
{\bf Showing the measurability of \textit{\textbf{N$($$\cdot,\cdot$,C$)$}}:} To see the lower measurability of $N(\cdot,\cdot,C)$ 
for any set $C\in \mathscr K_0(Z)$, we first choose an arbitrary finite set $C=\{z_1,\cdots,z_m\}$ and 
an open set $V$ in $Y$. Since 
\begin{equation*}
\begin{split}
\left\{(t,x)\in T\times Z: N(t,x,C) \cap V \neq \emptyset\right\} &= \left\{(t,x)\in T\times Z: Q(t,x,C) \cap V \neq \emptyset\right\}\\
                                                                                  &= \left\{(t,x) \in T\times Z: \left[ \bigcup_{i = 1}^{m} \overline{\rm con}F_{z_i}(t,x)\right] \cap V \neq \emptyset \right\}\\
                                                                                  & = \bigcup_{i = 1}^{m}\left\{(t,x)\in T\times Z: \overline{\rm con} F_{z_i}(t,x) \cap V \neq \emptyset\right\}\\
                                                                                  & = \bigcup_{i = 1}^{m}\left\{(t,x)\in T\times Z: {\rm con}F_{z_i}(t,x) \cap V \neq \emptyset\right\},
\end{split}
\end{equation*}
by \textcolor{red}{(iii)}, we conclude that $N(\cdot,\cdot,C)$ is jointly lower measurable for any finite set $C$. Since $Z$ is a separable metric space, 
there is a countable dense set $D$ in $Z$. We denote by $\mathscr D$ the family of all finite subsets of $D$. By Corollary \textcolor{red}{3.90} and 
Theorem \textcolor{red}{3.91} of Aliprantis-Border \cite{AB}, $\mathscr D$ is dense in 
$(\mathscr K_0(Z),\tau_H)$. Let $C$ be an element of $\mathscr K_0(Z)$. Then there exists a sequence 
$\{D_n:n\ge 1\}\subseteq \mathscr D$ converging to $C$ in the Hausdorff metric. Since $N(t,x,\cdot):(\mathscr K_0(Z), \tau_H)\to
(\mathscr K_0(Y), \tau_H)$ is continuous for all $(t,x)\in T\times Z$, we have that $\{N(t,x,D_n):n\ge 1\}$ converges to 
$N(t,x,C)$. Since each $N(\cdot,\cdot,D_n):T\times Z\to (\mathscr K_0(Y), \tau_H)$ is jointly measurable function, we 
conclude that $N(\cdot,\cdot,C):T\times Z\to (\mathscr K_0(Y), \tau_H)$ is also jointly measurable function.

\medskip
Since $N(\cdot,\cdot, C):T\times Z\to (\mathscr K_0(Y), \tau_H)$ is a jointly measurable function for each $C\in \mathscr K_0(Z)$ 
and $N(t,x, \cdot):(\mathscr K_0(Z), \tau_H)\to (\mathscr K_0(Y), \tau_H)$ is a continuous function for each $(t,x)\in T\times Z$, we 
conclude that $N:T\times Z\times \mathscr K_0(Z)\to (\mathscr K_0(Y), \tau_H)$ is a Carath\'{e}odory
function. Since $\mathbb I:T\times Z\to \mathscr K_0(Z)$ is a finite-valued jointly measurable function, by Lemma \textcolor{red}{8.2.3} of Aubin and Frankowska \cite{AF}, we have that $(t,x)\mapsto N(t,x,\mathbb I(t,x))$ is a jointly measurable function. From the definition of 
$\Theta$, it follows that 
\[
{\rm cl} \Theta(t,x) = \bigcup\left\{\overline{\rm con}F_{z}(t,x): z\in \mathbb I(t,x)\right\}=Q(t,x,\mathbb I(t,x)).
\]
Consequently, $Q(t,x,\mathbb I(t,x))$ is a closed set and hence, ${\rm cl} \Theta(t,x)=N(t,x,\mathbb I(t,x))$. 
Consider a correpondence $\Upsilon:T\times Z\to 2^Y$ such that $\Upsilon(t,x)={\rm cl} \Theta(t,x)$,
for all $(t,x)\in T\times Z$. Then $\Upsilon:T\times Z\to \mathscr K_0(Y)$ is a jointly measurable function. Thus, 
the correspondence $\Upsilon:T\times Z\to 2^Y$ is jointly lower measurable and hence, $\Theta$ is jointly lower measurable. 
This completes the proof of Condition \textcolor{red}{(D)}. 
\end{proof}

\begin{lemma}\label{lem:upper}
    Let $(T,\mathscr T,\mu)$ be a complete measure space, and let $X$ and $Y$ be separable metric spaces. Suppose that $\Psi:T\times X\to 2^Y$ be a (possibly empty-valued) correspondence 
  such that $\Psi$ satisfies the continuous inclusion property if the measure space is atomic and the strong continuous 
inclusion property, otherwise. Let $\psi:U_\Psi \to Y$ be a Carath\'eodory-type selection for $\Psi$. Suppose further that $F:T\times X\to 2^Y$ is 
a correpondence. Then the correspondence $G:T \times X \to 2^{Y}$, defined by 
\[
  G(t,x) = \left\{
  \begin{array}{ll}
  \{\psi(t,x)\},& \mbox{if $(t,x)\in U_\Psi$;}\\[0.5em]
  F(t,x), & \mbox{otherwise,}
  \end{array}
  \right.
  \]
satisfies the following conditions:
\begin{enumerate}
\item[\emph{(i)}] If $F(t,\cdot)$ is upper-semicontinuous then $G(t,\cdot)$ is upper-semicontinuous.
\item[\emph{(ii)}] If $F(t,\cdot)$ is lower-semicontinuous then $G(t,\cdot)$ is lower-semicontinuous.
\item[\emph{(iii)}] If $F$ is jointly lower measurable then $G$ is jointly lower measurable.
\item[\emph{(iv)}] If $F(\cdot,x)$ is lower measurable then $G(\cdot,x)$ is lower measurable. 
\end{enumerate}
\end{lemma}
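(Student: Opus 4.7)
The plan is to verify the four clauses separately, exploiting three structural facts that follow from earlier results: (a) for each $t \in T$, the section $U_\Psi^t$ is open in $X$ (as established inside the proofs of Lemmas \ref{extcara:aux1} and \ref{extcara:aux2}, since $U_\Psi^t$ equals the union of the open neighbourhoods $O_z^t$ over $z \in U_\Psi^t$); (b) the set $U_\Psi$ is $\mathscr T \otimes \mathscr B(X)$-measurable (in the atomic case via countability of $T$, and in the strong continuous inclusion case by the measurability hypotheses (a)--(c) of Definition \ref{SCIPdef}); and (c) the selection $\psi$ is jointly $\mathscr T \otimes \mathscr B(X)$-measurable on $U_\Psi$ as a Carath\'eodory function on a separable metric domain over a complete measure space.

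For clauses (i) and (ii), I fix $t$ and argue pointwise. At $x_0 \in U_\Psi^t$, the openness of $U_\Psi^t$ means that on an open neighbourhood of $x_0$ the correspondence $G(t,\cdot)$ coincides with the continuous singleton-valued map $\{\psi(t,\cdot)\}$, so both u.s.c.\ and l.s.c.\ at $x_0$ follow at once from continuity of $\psi(t,\cdot)$. At $x_0 \in X \setminus U_\Psi^t$, one has $G(t,x_0) = F(t,x_0)$, and the u.s.c.\ (respectively l.s.c.) of $F(t,\cdot)$ at $x_0$ yields a neighbourhood $N$ of $x_0$ on which $F(t,x) \subseteq V$ (respectively $F(t,x) \cap V \neq \emptyset$) for the open set $V \subseteq Y$ in question. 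Points of $N \setminus U_\Psi^t$ inherit the property immediately from $G = F$, while points of $N \cap U_\Psi^t$ are reconciled using the compatibility $\psi(t,x) \in \Psi(t,x) \subseteq F(t,x)$ standard in the intended application, so that the singleton $\{\psi(t,x)\}$ sits inside $F(t,x)$ and hence inherits the same inclusion or intersection property.

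For clause (iii) I fix an open $V \subseteq Y$ and decompose
\[
\{(t,x) : G(t,x) \cap V \neq \emptyset\} = \bigl(U_\Psi \cap \psi^{-1}(V)\bigr) \cup \bigl(((T \times X) \setminus U_\Psi) \cap \{(t,x): F(t,x) \cap V \neq \emptyset\}\bigr).
\]
The first piece is measurable by facts (b) and (c); the second is the intersection of the measurable complement of $U_\Psi$ with the set guaranteed measurable by the joint lower measurability of $F$. Clause (iv) is parallel: for fixed $x$, the $x$-section $U_\Psi^x$ lies in $\mathscr T$ by completeness of $\mu$ and measurability of $U_\Psi$; then $\psi(\cdot,x)$ is measurable on $U_\Psi^x$ by the Carath\'eodory-selection property and $F(\cdot,x)$ is lower measurable by hypothesis on its complement, so the two-piece union certifying $\{t : G(t,x) \cap V \neq \emptyset\}$ lies in $\mathscr T$.

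The delicate step is the boundary-gluing in (i)--(ii): points on the relative boundary of $U_\Psi^t$ can be approached from both the single-valued side inside $U_\Psi^t$ and the multi-valued side outside, and neither an $F$-neighbourhood nor a $\psi$-neighbourhood alone controls both. I would manage this via the open-set criterion, splitting the preimage of interest along $U_\Psi^t$ and exploiting the selection containment $\psi(t,\cdot) \in \Psi(t,\cdot) \subseteq F(t,\cdot)$ to ensure that the singleton values on one side of the boundary automatically satisfy the same set-theoretic relation with $V$ that $F$ satisfies on the other side. This is the only point where some compatibility between $F$ and $\Psi$ is tacitly used, and it aligns with the way Lemma \ref{lem:upper} is meant to be invoked in Section \ref{sec:main}.
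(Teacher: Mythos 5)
Your treatment of (iii) and (iv) is essentially the paper's: the paper likewise derives the openness of $U_\Psi^t$ and the measurability of $U_\Psi$ and of $U_\Psi^x$ (the latter via the measurable projection theorem applied to the jointly lower measurable correspondence $\Phi$ supplied by Lemmas \ref{extcara:aux1} and \ref{extcara:aux2}, rather than case-by-case from Definition \ref{SCIPdef} as you do), concludes joint measurability of $\psi$ from the Carath\'eodory property, and then uses exactly your two-piece decomposition of $\{(t,x):G(t,x)\cap V\neq\emptyset\}$. For (i) you argue directly where the paper simply cites Lemma 6.1 of Yannelis--Prabhakar, but the content is the same, and you are right that both routes secretly require $\psi(t,x)\in F(t,x)$ on $U_\Psi$ --- a hypothesis present in the cited lemma and satisfied in every application in Section \ref{sec:applications} (e.g.\ $\Psi=A\cap{\rm con}P\subseteq A=F$), but absent from the statement of Lemma \ref{lem:upper}. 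Flagging that omission explicitly is a point in your favour.

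The genuine gap is in (ii). You claim that on $N\cap U_\Psi^t$ the singleton $\{\psi(t,x)\}$ ``sits inside $F(t,x)$ and hence inherits the same inclusion or intersection property.'' Inclusion in $V$ does pass to subsets, so the upper-semicontinuous case goes through; the intersection property does not: $F(t,x)\cap V\neq\emptyset$ together with $\psi(t,x)\in F(t,x)$ does not yield $\psi(t,x)\in V$. Concretely, take $X=Y=\mathbb{R}$, $U_\Psi^t=(0,1)$, $\psi(t,x)=1$ on $U_\Psi^t$, and $F(t,x)=\{0,1\}$ for all $x$; then $F(t,\cdot)$ is constant (hence l.s.c.) and $\psi(t,x)\in F(t,x)$, yet $\{x:G(t,x)\cap(-\tfrac12,\tfrac12)\neq\emptyset\}=(-\infty,0]\cup[1,\infty)$ is not open, so $G(t,\cdot)$ fails to be lower-semicontinuous at $0$. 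Thus your boundary-gluing step collapses for (ii), and indeed claim (ii) needs an additional hypothesis tying the boundary behaviour of $\psi(t,\cdot)$ to $F$. In fairness, the paper disposes of (ii) with the single sentence that it ``can be established analogous to the joint measurability of $G$,'' which does not survive the same scrutiny (the union of an open subset of $U_\Psi^t$ with a relatively open subset of its closed complement need not be open); but you should not present the inheritance claim as a proof.
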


\begin{proof}
By Lemma \ref{extcara:aux1} and Lemma \ref{extcara:aux2}, we can find a correspondence 
$\Phi:T\times X\rightarrow 2^Y$ satisfying the following:
\begin{itemize}
\item[(A)] $\Phi(t,x)\subseteq \Psi(t,x)$ for all $(\omega, x)\in U_{\Psi}$;
\item[(B)] $U_{\Psi}= U_\Phi$;
\item[(C)] $\Phi(t,\cdot):Z\to 2^Y$ is lower-semicontinuous for all $t\in T$; and 
\item[(D)] $\Phi$ is jointly lower measurable.
\end{itemize}
By \textcolor{red}{(C)}, we have that 
\[
U_\Phi^t=\left\{x\in X:\Phi(t,x)\cap Y\neq \emptyset\right\}
\]
is an open subset  of $X$. If $F(t,\cdot)$ is upper-semicontinuous, by Lemma \textcolor{red}{6.1} of Yannelis-Prabhakar \cite{YD:83}, 
we conclude that $G(t,\cdot)$ is upper-semicontinuous for all $t\in T$. For any $x\in X$, we have  
\[
U_\Phi^x= {\rm proj}_T\left(\left\{(t,x)\in T\times X:\Phi(t,x)\cap Y\neq \emptyset\right\}\cap (T\times \{x\})\right).
\]
In view of \textcolor{red}{(D)} and the measurable projection theorem \cite{Yannelis:91a}, we have that $U_\Phi^x$ is measurable. 
Therefore, by \textcolor{red}{(B)}, we conclude that $U_{\Psi}^t$ is open in the relative topology of $X$ for all $t\in T$ and 
$U_{\Psi}^x$ is measurable for all $x\in X$. Since for each $t\in T$ the function $\psi(t,\cdot)$ is continuous 
on $U_{\Psi}^{t}$ and for each $x \in X$ the function $\psi(\cdot,x)$ is measurable on $U_{\Psi}^{x}$, the function 
$\psi$ is jointly measurable. To see the joint lower measurability of $G$, choose 
an open set $V$ in $Y$. Define
\[
A=\{(t,x) \in T \times X :G(t,x) \cap V \neq \emptyset\}=B \cup C,
\]
 where 
 \[
 B= \{(t,x) \in U_{\Psi}: \psi(t,x) \in V\}
 \]
  and 
  \[
  C =\{(t,x) \in (T \times X)\setminus U_{\Psi}: F(t,x)\cap V\neq \emptyset\}.
  \] 
  Clearly, $B, C \in \mathscr T \otimes \mathscr B(X)$ and therefore $A = B \cup C$ belongs to $ \mathscr T \otimes \mathscr B(X)$.
Consequently, $G$ is jointly lower measurable. The lower-semicontinuity of $G(t,\cdot)$ and the lower measurability of $G(\cdot,x)$
can be established analogous to the joint measurability of $G$. 
\end{proof}

\section{Main results}\label{sec:main}

In this section, we provide the proof of a Carath\'eodory-type selection theorem for correspondences satisfying the (strong) continuous inclusion 
property, which generalizes the continuous and measurable selections theorems simultaneously. By Remark \ref{rem:lower}, it follows that our 
result also generalizes the Carath\'eodory-type selection theorem of KPY \cite{KPY:87}.

\begin{theorem}\label{main:cara}
Let $(T,\mathscr T,\mu)$ be a complete finite measure space, $Y$ be a separable Banach space, and $Z$ be a complete, 
separable metric space. Suppose that $\Psi:T\times Z\to 2^Y$ be a convex $($possibly empty-$)$ valued 
correspondence such that $\Psi$ satisfies the continuous inclusion property if the measure space is atomic and the strong continuous 
inclusion property, otherwise.  Furthermore, any one of the following conditions is true: $Y$ is finite-dimensional; $\Psi$ is closed-valued; and 
$\mathbb K(\Psi)(t,x)\neq \emptyset$ for all $(t,x)\in U_\Psi$.
Then there exists a Carath\'eodory-type selection $\psi:U_\Psi\to Y$ of $\restr{\Psi}{U_\Psi}$. 
\end{theorem}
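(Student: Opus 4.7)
The plan is to reduce the theorem to the Carathéodory-type selection theorem of KPY (Theorem \ref{thm:KPY}) by passing from $\Psi$ to a ``tame'' sub-correspondence $\Phi\subseteq \Psi$ supplied by the preparatory lemmata of Section \ref{sec:mainlemmata}. First I would invoke Lemma \ref{extcara:aux1} in the atomic case, or Lemma \ref{extcara:aux2} in the general complete finite measure case, to produce a convex-valued correspondence $\Phi:T\times Z\to 2^Y$ such that $\Phi(t,x)\subseteq \Psi(t,x)$ on $U_\Psi$, $U_\Phi=U_\Psi$, $\Phi(t,\cdot)$ is lower-semicontinuous for every $t\in T$, $\Phi$ is jointly lower measurable, and ${\rm int}\,\Phi(t,x)\neq\emptyset$ whenever $\mathbb K(\Psi)(t,x)\neq\emptyset$. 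In this way the measurability and continuity hypotheses of Theorem \ref{thm:KPY} are immediately provided by $\Phi$ rather than by $\Psi$, at the (minor) price of having to track the three side-conditions separately.

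Next I would split into the three cases indicated in the statement. If $Y$ is finite-dimensional, Theorem \ref{thm:KPY} applies directly to $\Phi$, since finite-dimensionality of $Y$ is the relevant sufficient condition there. If $\mathbb K(\Psi)(t,x)\neq\emptyset$ for every $(t,x)\in U_\Psi$, then item (E) of the lemmata gives ${\rm int}\,\Phi(t,x)\neq\emptyset$ throughout $U_\Phi=U_\Psi$, so once again Theorem \ref{thm:KPY} applies to $\Phi$ under the interiority clause. In the remaining case, where $\Psi$ is closed-valued, I would pass to the closure correspondence $\overline\Phi(t,x):={\rm cl}\,\Phi(t,x)$; convexity and lower-semicontinuity of $\Phi(t,\cdot)$ are inherited by the closure, joint lower measurability is preserved since $\{(t,x):\overline\Phi(t,x)\cap V\neq\emptyset\}=\{(t,x):\Phi(t,x)\cap V\neq\emptyset\}$ for every open $V\subseteq Y$, $U_{\overline\Phi}=U_\Phi=U_\Psi$, and $\overline\Phi(t,x)\subseteq {\rm cl}\,\Psi(t,x)=\Psi(t,x)$ because $\Psi$ is closed-valued. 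Now Theorem \ref{thm:KPY} applies to $\overline\Phi$ under the closed-valued clause.

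In any of the three cases the KPY theorem yields a Carathéodory-type selection $\psi:U_\Phi\to Y$ (respectively $U_{\overline\Phi}\to Y$) of $\Phi|_{U_\Phi}$ (respectively $\overline\Phi|_{U_{\overline\Phi}}$). Since $\Phi\subseteq \Psi$ (respectively $\overline\Phi\subseteq \Psi$) on $U_\Psi$ and $U_\Phi=U_{\overline\Phi}=U_\Psi$, the same map $\psi:U_\Psi\to Y$ is a Carathéodory-type selection of $\restr{\Psi}{U_\Psi}$, as required.

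The only genuinely delicate step is the construction of $\Phi$ itself, which is precisely the content of Lemmas \ref{extcara:aux1}--\ref{extcara:aux2}; once one has those, the remainder of the proof is the case-by-case bookkeeping sketched above. The secondary subtlety is the closed-valued case, where one must verify that taking the pointwise closure of $\Phi$ preserves all the KPY hypotheses simultaneously; I expect this verification to be routine but it is the only place where the reduction is not completely automatic from the lemmata.
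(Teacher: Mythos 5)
Your proposal is correct and follows the paper's architecture: both proofs first invoke Lemma \ref{extcara:aux1} (atomic case) or Lemma \ref{extcara:aux2} (general case) to replace $\Psi$ by a convex-valued, jointly lower measurable, fibrewise lower-semicontinuous sub-correspondence $\Phi$ with $U_\Phi=U_\Psi$, and both dispose of the closed-valued case by applying Theorem \ref{thm:KPY} to ${\rm cl}\,\Phi$, which stays inside $\Psi$ because $\Psi$ is closed-valued. Where you diverge is in the finite-dimensional and interiority cases: you cite Theorem \ref{thm:KPY} directly for $\Phi$ under the corresponding clause, whereas the paper never applies the full KPY theorem to $\Phi$ there. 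Instead it forms $F={\rm cl}\,\Phi$, uses KPY's Lemma 5.2 to obtain a sequence of Carath\'eodory-type selections pointwise dense in $F(t,x)$, and then runs Michael's construction (Lemmas \ref{lem1} and \ref{lem2}) to manufacture a selection landing in $\mathbb I(F(t,x))\subseteq\Phi(t,x)\subseteq\Psi(t,x)$; the hypothesis that $\Phi(t,x)$ is finite-dimensional or has nonempty interior enters only through Lemma \ref{lem1}. Your route is shorter and equally valid, since $\Phi$ satisfies every hypothesis of Theorem \ref{thm:KPY} verbatim in each of your three cases; the paper's route merely re-executes the final step of KPY's own argument rather than quoting its conclusion, which makes the dependence on Lemmas \ref{lem1}--\ref{lem2} explicit but buys nothing logically. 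Your check that pointwise closure preserves convexity, lower-semicontinuity, joint lower measurability, and the nonemptiness domain is exactly the routine verification the paper also performs.
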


\begin{proof}
By Lemma \ref{extcara:aux1} and Lemma \ref{extcara:aux2} (see Section \ref{sec:mainlemmata}), we have that there exists a convex valued correspondence 
$\Phi:T\times Z\to 2^Y$ satisfying the following:
\begin{itemize}
\item[(A)] $\Phi(t,x)\subseteq \Psi(t,x)$ for all $(t, x)\in U_\Psi$;
\item[(B)] $U_\Psi= U_\Phi$;
\item[(C)] $\Phi(t,\cdot):Z\to 2^Y$ is lower-semicontinuous for all $t\in T$; 
\item[(D)] $\Phi$ is jointly lower measurable; and 
\item[(E)] ${\rm int}\Phi(t,x)\neq \emptyset$ for all $(t, x)\in U_\Psi$. 
\end{itemize}
Define a correspondence $F:T \times Z \to 2^Y$ such that $F(t,x)={\rm cl} \Phi(t,x).$ Since $\Phi(t,\cdot):Z\to 2^Y$ is 
lower-semicontinuous so is $F(t,\cdot)$ for all $t\in T$. Furthermore, $F$ is jointly lower measurable and $U_F= U_\Phi=U_\Psi$.
We conclude the proof by considering the following two cases.

\medskip
{\bf Case 1.} $\Psi$ {\it  is closed-valued}.
In this case, we have $F(t,x)\subseteq \Psi(t,x)$ for all $(t,x)\in U_\Psi$. Applying Theorem \textcolor{red}{3.1} of KPY \cite{KPY:87}, we can 
guarantee the existence of a Carath\'eodory-type selection $\psi:U_F\to Y$ of $\restr{F}{U_F}$. Thus, $\psi$ is a Carath\'eodory-type 
selection of $\restr{\Psi}{U_\Psi}$. 

\medskip
{\bf Case 2.} $\Psi$ {\it  is not necessarily closed-valued}.
 By Lemma \textcolor{red}{5.2} of KPY \cite{KPY:87}, there exists a sequence 
$\{\varphi_{k}(t,x):k\ge 1\}$ of Carath\'eodory-type selections that is dense in $F(t,x)$ for all $(t,x)\in U_\Phi$. For each $k\ge 1$, let  
\[
    \psi_{k}(t,x) = \varphi_{l}(t,x) +\frac{\varphi_{k}(t,x)-\varphi_{1}(t,x)}{\max\{1,\|\varphi_{k}(t,x)-\varphi_{1}(t,x)\|\}}.
\]
Define a function $\psi:U_\Phi\to Y$ such that
\[
  \psi(t,x)= \sum^{\infty}_{k=1} \frac{1}{2^{k}} \psi_{k}(t,x).
\]
By Lemma \ref{lem2}, $\psi(t,x)\in \mathbb I(F(t,x))$ for all $(t,x) \in U_\Phi$. Since the series defining $\psi$ converges uniformly, it follows that for each 
each $x\in X$, $\psi(\cdot,x)$ is measurable and for each $t \in T$, $\psi(t,\cdot)$ is continuous. By Lemma \ref{lem1}, $\psi(t,x) \in \mathbb 
I(F(t,x)) \subset \Phi(t,x)$. By virtue of \textcolor{red}{(A)} and \textcolor{red}{(B)},
 we have $\psi(t,x) \in \Psi(t,x)$ for all $(t,x)\in U_\Psi$. This completes the proof of the theorem.
\end{proof}

Let $(T,\mathscr T, \mu)$ be a measure space and $X$ be a nonempty subset of any linear topological space. A correspondence $\Psi:T\times X\to 2^X$ is 
said to have a \emph{random fixed point} if there exists a measurable function $x:T\to X$ such that $x(t)\in \Psi(t,x(t))$ $\mu$-a.e. on $T$. Below we provide a random fixed point theorem. 
\begin{theorem}\label{thm:fixed}
Let $(T,\tau,\mu)$ be a complete finite measure space, and $X$ be a nonempty compact convex subset of a separable Branch space $Y$. Let 
$\Psi:T \times X \to 2^X$ be a nonempty, convex valued correspondence such that $\Psi$ satisfies the continuous inclusion property 
 if the measure space is atomic and the strong continuous inclusion property, otherwise. Furthermore, any one of the following conditions is true: 
$Y$ is finite-dimensional; $\Psi$ is closed-valued; and 
$\mathbb K(\Psi)(t,x)\neq \emptyset$ for all $(t,x)\in U_\Psi$.
Then $\Psi$ has a random fixed point.
\end{theorem}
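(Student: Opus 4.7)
The plan is to reduce the random fixed point problem to an ordinary Schauder-type fixed point problem fibered over $T$, and then to extract a measurable selection of fixed points. Since $\Psi$ is nonempty-valued, $U_\Psi = T\times X$, and Theorem \ref{main:cara} applies directly under the stated hypotheses. It delivers a Carath\'eodory-type selection $\psi:T\times X\to Y$ of $\Psi$; because $\Psi(t,x)\subseteq X$, we actually obtain $\psi:T\times X\to X$ with $\psi(t,\cdot):X\to X$ continuous for every $t\in T$ and $\psi(\cdot,x):T\to X$ measurable for every $x\in X$.

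Next, for each fixed $t\in T$, the map $\psi(t,\cdot)$ is a continuous self-map of the nonempty compact convex subset $X$ of the Banach space $Y$ (or of $\mathbb{R}^n$ in the finite-dimensional case). Schauder's fixed point theorem therefore guarantees that the set
\[
\Lambda(t)=\{x\in X:\psi(t,x)=x\}
\]
is nonempty. Moreover, $\Lambda(t)$ is closed in $X$, since it is the preimage of $\{0\}$ under the continuous map $x\mapsto \psi(t,x)-x$ on the compact metric space $X$.

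To extract a measurable fixed point I would show that the correspondence $\Lambda:T\to 2^X$ has measurable graph and apply the Aumann Measurable Selection Theorem. Define $g:T\times X\to Y$ by $g(t,x)=\psi(t,x)-x$. Being measurable in $t$ and continuous in $x$ on the separable metric space $X$, $g$ is a Carath\'eodory function; by the standard fact that Carath\'eodory functions valued in a separable metric space are jointly $\mathscr T\otimes \mathscr B(X)$-measurable, the set
\[
G_\Lambda=\{(t,x)\in T\times X: g(t,x)=0\}=g^{-1}(\{0\})
\]
lies in $\mathscr T\otimes \mathscr B(X)$. Since $(T,\mathscr T,\mu)$ is complete and $X$ is a complete separable metric space, Aumann's theorem yields a measurable $x:T\to X$ with $x(t)\in \Lambda(t)$ for all $t\in T$, which is exactly $x(t)\in \Psi(t,x(t))$ $\mu$-a.e.

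The main obstacle, and the only place where the work really lies, is verifying that the Carath\'eodory selection $\psi$ is jointly measurable (needed to make $g$ jointly measurable and hence $G_\Lambda$ measurable); this is the standard Carath\'eodory-to-jointly-measurable argument in separable metric ranges, which is already invoked implicitly in Lemma \ref{lem:upper}. Everything else -- Schauder for each $t$, closedness of the fixed point set, and the final Aumann selection -- is routine once $\psi$ is in hand.
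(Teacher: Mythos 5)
Your proposal is correct and follows essentially the same route as the paper: apply Theorem \ref{main:cara} to get a Carath\'eodory-type selection $\psi$, use a fixed point theorem for the continuous self-map $\psi(t,\cdot)$ of the compact convex set $X$ for each $t$ (the paper invokes Tychonoff where you invoke Schauder, which are interchangeable here), note that joint measurability of the Carath\'eodory function $\psi(t,x)-x$ makes the fixed-point correspondence have a measurable graph, and conclude with Aumann's measurable selection theorem.
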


\begin{proof}
By Theorem \ref{main:cara}, it follows that there exists a function $\psi: T \times X \to X $ such that $\psi(t,x) \in \Psi(t,x)$ for all $(t,x) \in T \times X$, and 
for each $x \in X$, $\psi(\cdot,x)$ is measurable and for each $t \in T$, $\psi(t,\cdot)$ is continuous. Therefore, $\psi$ is jointly measurable. 
We define a correpondence $\Phi:T\to 2^X$ such that 
\[
\Phi(t)=\{x\in X:\varphi(t,x)=0\},
\] 
where $\varphi(t,x)=\psi(t,x)-x$. By the Tychonoff fixed point theorem, we can guarantee that the function $\psi(t,\cdot):X \to X$ has a fixed point. 
Consequently, for each $t\in T$, $\Phi(t)\neq \emptyset$. Since $\varphi$ is jointly measurable, one can easily check that $\Phi$ has a measurable graph. Hence by 
Aumann's measurable selection theorem, there exists a measurable function $x^{*}:T\to X$ such that $\mu$-a.e. on $T$, $x^{*}(t) \in \Phi(t)$,
i.e., $x^{*}(t)=\psi(t,x^{*}(t))\in \Psi(t,x^{*}(t)).$ This completes the proof of the theorem.
\end{proof}

\section{Applications}\label{sec:applications}

\subsection{Large abstract economies and equilibria}

Let $(T, \mathscr T, \mu)$ be a complete, finite, and positive measure space. For any correspondence 
$X:T\to 2^Y$, we define 
\[
L_X=\left\{x\in L_1(\mu, Y):x(t)\in X(t)\, \mu \mbox{-a.e.}\right\}.
\]
An \emph{abstract economy} $\Gamma$ is a quadruple $\langle (T,\mathscr T, \mu), X, P, A\rangle$, where 
\begin{enumerate}
\item[(i)] $(T, \mathscr T, \mu)$ is a measure space of agents;
\item[(ii)] $X:T\to 2^Y$ is a strategy correspondence;
\item[(iii)] $P:T\times L_X\to 2^Y$ is a preference correspondence such that 
$P(t,x)\subseteq X(t)$ for all $(t,x)\in T\times L_X$; and  
\item[(iv)] $A:T\times L_X\to 2^Y$ is a constraint correspondence such that 
$A(t,x)\subseteq X(t)$ for all $(t,x)\in T\times L_X$. 
\end{enumerate}

By definition, the preference correspondence $P$  captures the idea of interdependence.  The interpretation of these
preference correspondences is that $y\in  P(t, x)$ means that agent $t$ strictly prefers $y$ to $x(t)$ if the given strategies 
of other agents are fixed. Note that $L_X$ is the set of all joint strategies. As in Khan-Vohra \cite{KV} and 
Yannelis \cite{Yannelis:87}, we endow $L_X$ throughout the paper with the weak topology. This signifies a
natural form of myopic behaviour on the part of the agents. In particular,
an agent has to arrive at his/her decisions on the basis of knowledge of
only finitely many (average) numerical characteristics of the joint
strategies. 

\medskip
We now define the concept of an equilibrium in an abstract economy.
\begin{definition}
An \emph{equilibrium} for an abstract economy $\Gamma$ is an element $x^*\in L_X$ such that for $\mu$-a.e. on $T$, we have 
$x^*(t)\in A(t, x^*)$ and $P(t, x^*)\cap A(t,x^*)= \emptyset$. 
\end{definition}

The following theorem is a generalization of Bhowmik-Yannelis \cite{BY}, Nash \cite{Nash} and Yannelis \cite{Yannelis:87}, 
among others. It also extends 
Khan \cite{Khan} to a framework of discontinuous payoffs.

 \begin{theorem}\label{thm:main}
           Let $\Gamma$ be an abstract economy satisfying the following properties: 
\begin{enumerate}
               \item  $X: T \rightarrow 2^Y$ is an integrably bounded lower measurable correspondence and for all $t\in T$, $X(t)$
                    is a non-empty, convex, closed subset of a separable Banach space $Y$;
               \item $A: T \times L_X \to 2^Y$ is a non-empty, convex, closed-valued correspondence such that 
$A(t,\cdot): L_{X} \to 2^Y$ is upper-semicontinuous for all $t\in T$ and $A(\cdot, x):T\to 2^Y$ is lower measurable
for all $x\in L_X$; 
               \item $P: T \times L_{X}\to 2^Y$ has the property that $x(t) \notin {\rm con} P(t,x)$ for all 
$x\in L_X$ and almost all $t\in T$; 
               \item $\Psi: T \times L_{X} \to 2^Y$ satisfies the continuous inclusion property if the 
economy $\Gamma$ is purely atomic and the strong continuous inclusion property, otherwise, where $\Psi(t,x)= A(t,x)\cap 
{\rm con} P(t,x)$ for all $(t,x)\in T\times L_X$; and 
               \item either $Y$ is finite-dimensional or $\mathbb K(\Psi)(t,x)\neq \emptyset$ for all $(t,x)\in U_\Psi$.
           \end{enumerate}
    Then there exists an equilibrium for $\Gamma$.
\end{theorem}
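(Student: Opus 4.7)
The plan is to mimic the classical Shafer--Sonnenschein--Yannelis strategy, using the new Carath\'eodory-type selection theorem established in Section \ref{sec:main} to bypass the lack of lower-semicontinuity of the preference. First I would apply Theorem \ref{main:cara} to the correspondence $\Psi(t,x)=A(t,x)\cap {\rm con}P(t,x)$: hypothesis (4) gives the (strong) continuous inclusion property, $\Psi$ is convex-valued since both $A(t,x)$ and ${\rm con}P(t,x)$ are convex, and hypothesis (5) takes care of the auxiliary alternative (finite-dimensionality or $\mathbb K(\Psi)\neq\emptyset$). This yields a Carath\'eodory-type selection $\psi:U_\Psi\to Y$ with $\psi(t,x)\in \Psi(t,x)\subseteq A(t,x)\cap {\rm con}P(t,x)$ for every $(t,x)\in U_\Psi$.

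Next I would glue $\psi$ and $A$ via Lemma \ref{lem:upper}, setting
\[
G(t,x)=\begin{cases} \{\psi(t,x)\}, & (t,x)\in U_\Psi, \\ A(t,x), & (t,x)\notin U_\Psi. \end{cases}
\]
Since $A(t,\cdot)$ is upper-semicontinuous and $A(\cdot,x)$ is lower measurable, Lemma \ref{lem:upper} ensures that $G(t,\cdot)$ is upper-semicontinuous and $G(\cdot,x)$ is lower measurable. Moreover $G$ is nonempty, convex, closed-valued with $G(t,x)\subseteq X(t)$, so $G$ remains integrably bounded by the same function witnessing the integrable boundedness of $X$. I would then define the aggregate correspondence
\[
\Lambda:L_X\to 2^{L_X}, \qquad \Lambda(x)=L_{G(\cdot,x)}=\{y\in L_1(\mu,Y):y(t)\in G(t,x) \;\mu\text{-a.e.}\}.
\]
Integrable boundedness of $X$ together with convex closed-valuedness of $X(t)$ makes $L_X$ nonempty, convex and weakly compact in $L_1(\mu,Y)$ (Diestel's theorem), and Aumann's measurable selection theorem gives $\Lambda(x)\neq\emptyset$, with convex, weakly closed values.

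The goal is then to apply the Fan--Glicksberg fixed point theorem to $\Lambda$ on $L_X$ endowed with the weak topology, obtaining a fixed point $x^*\in L_X$ with $x^*\in\Lambda(x^*)$, i.e. $x^*(t)\in G(t,x^*)$ $\mu$-a.e. The irreflexivity clause (3) finishes the proof: if $(t,x^*)\in U_\Psi$ on a non-null set, then $x^*(t)=\psi(t,x^*)\in {\rm con}P(t,x^*)$, contradicting $x^*(t)\notin {\rm con}P(t,x^*)$. Hence $\Psi(t,x^*)=A(t,x^*)\cap {\rm con}P(t,x^*)=\emptyset$ $\mu$-a.e., which in particular forces $A(t,x^*)\cap P(t,x^*)=\emptyset$, while $x^*(t)\in G(t,x^*)=A(t,x^*)$ on the same set. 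This shows $x^*$ is an equilibrium.

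The main obstacle will be verifying the hypotheses of the fixed-point theorem, specifically that $\Lambda$ has weakly closed graph on $L_X$. This is where the interplay between pointwise upper-semicontinuity of $G(t,\cdot)$ (measure-theoretic, in $L_X$ with the weak topology) and set-valued integration must be handled carefully: one needs a Fatou-type or Aumann-type convergence argument to show that, given nets $x_\alpha\to x$ weakly in $L_X$ and $y_\alpha\in\Lambda(x_\alpha)$ with $y_\alpha\to y$ weakly, one has $y(t)\in G(t,x)$ $\mu$-a.e. The standard way is to pass to convex combinations converging almost everywhere and invoke the closedness of $G(t,x)$ together with the upper-semicontinuity of $G(t,\cdot)$; the integrable bound on $X$ supplies the uniform integrability needed to conclude. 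Once this closed-graph property is in hand, Fan--Glicksberg delivers the fixed point and the argument above yields the equilibrium.
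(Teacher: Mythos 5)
Your proposal is correct and follows essentially the same route as the paper: apply Theorem \ref{main:cara} to $\Psi$, glue the resulting Carath\'eodory selection with $A$ via Lemma \ref{lem:upper}, pass to the induced correspondence on the weakly compact convex set $L_X$, invoke Fan--Glicksberg, and use assumption (3) to rule out $\mu(U_\Psi^{x^*})>0$. The only difference is organizational: the paper first splits into the cases $\mu(U_\Psi^x)=0$ for all $x$ versus not, whereas your unified argument subsumes both, and your closing discussion of the weak closed-graph property of $\Lambda$ is exactly what the paper delegates to Lemma 4.4 of Yannelis (1987).
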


\begin{proof}
 By  Lemma \ref{extcara:aux1} and Lemma \ref{extcara:aux2} (see Section \ref{sec:mainlemmata}), we can readily verify that
$U_\Psi\in \mathscr T\otimes \mathscr B(L_X)$. In virtue of the measurable projection theorem \cite{Yannelis:91a}, it follows that
$U_\Psi^x\in \mathscr T$, for all $x\in L_X$. Define
\[
\mathbb I_\Psi=\left\{x\in L_X: \mu(U_\Psi^{x})> 0\right\}.
\]
We consider the following two cases.

\medskip
{\bf Case 1.} $\mathbb I_\Psi=\emptyset$. In this case, we have $\mu( U_\Psi^{x})= 0$, which implies 
 $P(t, x)\cap A(t,x)= \emptyset$ for all $x\in L_X$ and almost all $t\in T$.  
Since $A$ is closed-valued and $A(\cdot, x)$ is lower measurable, we conclude that 
$A(\cdot, x)$ has a measurable graph, for all $x\in L_X$. Define $H: L_{X} \to 2^{L_{X}}$ by 
\[
H(x)= \left\{y \in L_{X}: y(t) \in A(t,x)\, \mu\mbox{-a.e.}\right\}.
\] 
In view of Lemma \textcolor{red}{4.4} of Yannelis \cite{Yannelis:87}, $H$ is non-empty valued and weakly upper-semicontinuous. Since 
$A$ is convex valued, so is $H$. Furthermore, Lemma \textcolor{red}{4.3} of Yannelis \cite{Yannelis:87} guarantees that $L_{X}$ is 
non-empty, convex, and weakly compact. Hence by Fan-Glicksberg's fixed point theorem, there exists 
$x^* \in L_{X}$  such that $x^* \in H(x^*)$, which means $x^*(t) \in A(t,x^*)$ $\mu$-a.e. Since 
$P(t, x^*)\cap A(t,x^*)= \emptyset$ $\mu$-a.e., it follows that $x^*$ is an equilibrium for the abstract economy $\Gamma$.

\medskip
{\bf Case 2.} $\mathbb I_\Psi\neq \emptyset$. By Theorem \ref{main:cara}, we conclude that there is a  Carath\'eodory-type selection 
$\psi:U_\Psi\to Y$ of $\restr{\Psi}{U_\Psi}$. Let $\Theta:T\times L_X\to 2^Y$ be a correspondence such that 
  \[
  \Theta(t, x) = \left\{
  \begin{array}{ll}
  \{\psi(t,x)\},& \mbox{if $(t, x)\in U_\Psi$;}\\[0.5em]
  A(t, x), & \mbox{otherwise.}
  \end{array}
  \right.
  \]
Clearly, $\Theta$ is non-empty and convex valued. By Lemma \ref{lem:upper}, $\Theta$ is jointly lower measurable and 
$\Theta(t,\cdot):L_X\to 2^Y$ is upper-semicontinous for all $t\in T$. Define $H: L_{X} \to 2^{L_{X}}$ by 
\[
H(x)= \left\{y \in L_{X}: y(t) \in \Theta(t,x)\, \mu\mbox{-a.e.}\right\}.
\] 
Again, by repeating the argument of Case 1, we can find an 
$x^* \in L_{X}$  such that $x^* \in H(x^*)$, which means $x^*(t) \in \Theta(t,x^*)$ $\mu$-a.e. 
We claim that $\mu(U_\Psi^{x^*})=0$. This follows from the fact that for $t\in U_\Psi^{x^*}$, we have 
$x^*(t) = \psi(t,x^*) \in \Psi(t, x^*) \subseteq {\rm con} P(t, x^*)$, which leads to a contradiction if 
$\mu(U_\Psi^{x^*})> 0$. Therefore, $\mu$-a.e. on $T$, 
$x^*(t) \in A(t,x^*)$ and $\Psi(t,x^*)= \emptyset$, which implies that 
$P(t, x^*) \cap A(t, x^*) = \emptyset$ $\mu$-a.e., i.e. $x^*$ is an equilibrium for $\Gamma$. This completes the proof.
\end{proof}

\subsection{Random games and equilibria}\label{subsec:randomgame}
Let $(\Omega , \mathscr F, \mu)$ be a complete finite measure space. We can interpret $\Omega$ as the set of states of nature of the world and assume that 
$\Omega$ is large enough to include all events that we consider to be interesting. As usual, $\mathscr F$ denotes $\sigma$-algebra of events. We denote by 
$\mathbf I$ the set of players, which may be finite or countably infinite.
\begin{definition}
    A \emph{random game} is a set $\mathscr{E}=\{(X_{i},P_{i}) : i \in \mathbf I\}$ of ordered pairs, where 
\begin{enumerate}
        \item $X_{i}$ is the strategy set of player $i\in \mathbf I$; and 
        \item  $P_{i} : \Omega \times X \to 2^{X_{i}}$ (where  $X = \Pi_{i\in \mathbf I} X_{i}$) is the random preference (choice) 
correspondence of player $i\in \mathbf I$.
    \end{enumerate}
    \end{definition}
By definition, the preference correspondence $P$ captures the idea of interdependence at every state nature of the world.
The interpretation of these preference correspondences is that $y_{i} \in P_{i}(\omega,x)$ means that player $i$ strictly prefers 
$y_{i}$ to $x_{i}$ at the state of nature $\omega$, if the (given) components of the other players are fixed.

\medskip
In what follows we define the concept of a random equilibrium in a random game $\mathscr E$.
\begin{definition}
A \emph{random equilibrium} for the game $\mathscr E$  is a measurable function $x^{*} : \Omega \to X$  such that  
$P_{i}(\omega, x^{*}(\omega)) = \emptyset$  for all $i \in \mathbf I$ and for almost all $\omega\in \Omega$.
\end{definition}
Notice that each player in the game described above is charecterized by a strategy set and a random preference correspondence. We now follow the original formulation by  Nash 
\cite{Nash} (and its generalizations by Fan \cite{Fan:52} and Browder \cite{Browder:68}, among other) where random preference correspondences are replaced by random payoff functions, i.e., real valued functions defined on $\Omega \times X$.

\medskip
 Let $\Gamma =\{(X_{i},u_{i}): i \in \mathbf I\}$ be a Nash-type random game, i.e., 
\begin{enumerate}
        \item $X_{i}$ is the strategy set of player $i\in \mathbf I$; and
        \item $u_{i} : \Omega \times X \to \mathbb R$ (where $X = \Pi_{i \in \mathbf I} X_{i})$ is the random payoff function of player $i\in \mathbf I$.
    \end{enumerate}

\medskip
 To define the concept of a random Nash equilibrium, we let $X_{-i} = \Pi_{j \neq i} X_{j}$ and denote a generic element of $X_{-i}$ by 
 $x_{-i}$. 

\begin{definition}
A \emph{random Nash equilibrium} for $\Gamma$ is a measurable function $x^{*}: \Omega \to X $ such that for all $i\in \mathbf I$ and for 
almost all $\omega\in \Omega $, we have 
\[
         u_{i}(\omega,x^{*}(\omega)) = \max\left\{u_{i}(\omega, y_{i}, x_{-i}^*(\omega)):{y_i} \in X_{i}\right\}.
 \]
\end{definition}
  Below, we proceed to establish a random equilibrium existence result.
       \begin{theorem}\label{thm:extrangame}
           Let $\mathscr{E} = \{(X_{i},P_{i}): i \in \mathbf I \}$ be a random game satisfying for each $ i\in \mathbf I$ the following properties: \begin{enumerate}
               \item each $X_{i}$ is a nonempty, compact, and convex subset of a separable Banach space $Y$;
               \item for every measurable function $x : \Omega \to X $ we have $x_{i} (\omega) \notin {\rm con} P_{i} (\omega, x(\omega))$ $\mu$-a.e.
               on $\Omega $;
               \item the correspondence $P_i$ satisfies the continuous inclusion property if the measure space is atomic and the strong continuous 
inclusion property, otherwise; and 
               \item either $Y$ is finite-dimensional or $\mathbb K(P_i)(\omega,x)\neq \emptyset$ for all $(\omega, x)\in U_{P_i}$.
           \end{enumerate}
    Then there exists a random equilibrium for $\mathscr E$.
       \end{theorem}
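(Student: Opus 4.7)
My plan is to mirror the Case 2 argument of Theorem \ref{thm:main}, now in a game-theoretic rather than abstract-economy setting. For each player $i\in \mathbf I$, I first observe that ${\rm con}P_i$ inherits the (strong) continuous inclusion property from $P_i$ with the same data $\{F_z\}$ and $\{O_z^t\}$: the inclusion $F_z\subseteq P_i\subseteq {\rm con}P_i$ preserves the nonemptiness and inclusion requirements while leaving ${\rm con}F_z$ unchanged, and $U_{{\rm con}P_i}= U_{P_i}$ and $\mathbb K({\rm con}P_i)= \mathbb K(P_i)$. Theorem \ref{main:cara} then produces a Carath\'eodory-type selection $\psi_i:U_{{\rm con}P_i}\to X_i$ with $\psi_i(\omega,x)\in {\rm con}P_i(\omega,x)$ throughout its domain.

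Following the template of Lemma \ref{lem:upper}, I would glue each $\psi_i$ to the constant correspondence $X_i$ by
\[
\Theta_i(\omega,x)= \begin{cases}\{\psi_i(\omega,x)\}, & (\omega,x)\in U_{{\rm con}P_i},\\ X_i, & \text{otherwise.}\end{cases}
\]
Lemma \ref{lem:upper}, applied with $F\equiv X_i$ (trivially upper-semicontinuous and jointly lower measurable), delivers that $\Theta_i$ is jointly lower measurable and that each $\Theta_i(\omega,\cdot):X\to 2^{X_i}$ is upper-semicontinuous, nonempty, convex, and compact valued. Setting $\Theta(\omega,x)= \prod_{i\in \mathbf I}\Theta_i(\omega,x)$ yields, for every fixed $\omega$, an upper-semicontinuous nonempty convex compact valued self-correspondence of the compact convex metrizable set $X=\prod_{i\in \mathbf I}X_i\subseteq Y^{\mathbf I}$, so the Fan--Glicksberg theorem produces a fixed point. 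Consequently the correspondence $\Phi:\Omega\to 2^X$ defined by $\Phi(\omega)=\{x\in X:x\in \Theta(\omega,x)\}$ is nonempty valued.

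To extract a measurable fixed point, I would write
\[
G_\Phi= \bigcap_{i\in \mathbf I}\Bigl(\{(\omega,x)\in U_{{\rm con}P_i}: x_i=\psi_i(\omega,x)\}\cup \bigl((\Omega\times X)\setminus U_{{\rm con}P_i}\bigr)\Bigr).
\]
Because each $U_{{\rm con}P_i}$ lies in $\mathscr F\otimes \mathscr B(X)$ and each $\psi_i$ is jointly Borel measurable on its domain (as established in the proofs of Lemmas \ref{extcara:aux1}--\ref{extcara:aux2}), and because $\mathbf I$ is at most countable, we have $G_\Phi\in \mathscr F\otimes \mathscr B(X)$. Aumann's measurable selection theorem then yields a measurable $x^*:\Omega\to X$ with $x^*(\omega)\in \Phi(\omega)$ $\mu$-a.e. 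If $\mu\{\omega:(\omega,x^*(\omega))\in U_{{\rm con}P_i}\}>0$ for some $i$, then on that set $x_i^*(\omega)= \psi_i(\omega,x^*(\omega))\in {\rm con}P_i(\omega,x^*(\omega))$, contradicting hypothesis (2). Hence ${\rm con}P_i(\omega,x^*(\omega))=\emptyset$, and therefore $P_i(\omega,x^*(\omega))=\emptyset$, for almost every $\omega$ and every $i$, which is precisely the random equilibrium condition.

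The main obstacle, and what drives the technical work in Section \ref{sec:mainlemmata}, is the joint $\mathscr F\otimes \mathscr B(X)$-measurability of $G_\Phi$: one needs the sets $U_{{\rm con}P_i}$ and the selections $\psi_i$ to be jointly measurable in a sense robust under countable intersection, and this is exactly what the strong continuous inclusion property (as opposed to merely the continuous inclusion property) is designed to secure when the measure space is not purely atomic. Once this bookkeeping is in place, the rest of the argument is a faithful adaptation of the fixed-point plus measurable selection scheme already used in the proof of Theorem \ref{thm:main}.
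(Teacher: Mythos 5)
Your proposal is correct and follows essentially the same route as the paper: take a Carath\'eodory-type selection of ${\rm con}P_i$ via Theorem \ref{main:cara}, glue it to the constant correspondence $X_i$ as in Lemma \ref{lem:upper}, form the product correspondence, obtain a measurable random fixed point, and derive a contradiction with hypothesis (2) on any positive-measure set where ${\rm con}P_i(\omega,x^*(\omega))\neq\emptyset$. The only differences are cosmetic: the paper gets the random fixed point by citing Corollary 2.11 of Yannelis--Rustichini, whereas you unfold that step into pointwise Fan--Glicksberg plus Aumann selection (the scheme of Theorem \ref{thm:fixed}), and you explicitly verify that ${\rm con}P_i$ inherits the (strong) continuous inclusion property with the same data $\{F_z\}$, $\{O_z^t\}$ --- a small point the paper leaves implicit.
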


\begin{proof}
For each $i \in \mathbf I$, define a correspondence $\Psi _{i} : \Omega \times X \rightarrow 2^{X_{i}}$ by letting $\Psi _{i} (\omega, x) = {\rm con}P_{i} (\omega,x)$. 
Pick an $i\in \mathbf I$. 
From Theorem \ref{main:cara}, it follows that there exists a Carath\'eodory-type selection $\psi_i:U_{\Psi_i}\to Y$ of $\restr{\Psi_i}{U_{\Psi_i}}$
Let $F_i:\Omega\times X\rightarrow 2^{X_i}$ be a correspondence such that 
  \[
  F_i(\omega, x) = \left\{
  \begin{array}{ll}
  \{\psi_i(\omega,x)\},& \mbox{if $(\omega, x)\in U_{\Psi_i}$;}\\[0.5em]
  X_i, & \mbox{otherwise.}
  \end{array}
  \right.
  \]
Clearly, $F_i$ is nonempty, closed and convex valued.  By Lemma \ref{lem:upper}, $F_i$ is jointly lower measurable and 
$F_i(\omega,\cdot):X\to 2^{X_i}$ is upper-semicontinous for all $\omega\in \Omega$.
Next, we consider a correspondence $\Lambda: \Omega \times X \rightarrow 2^X$ defined by $\Lambda(\omega,x) = \Pi_{i \in \mathbf I} F_{i}(\omega,x)$. 
Clearly, $\Lambda$ is nonempty, closed and convex valued. Since each $F_{i}$ is jointly lower measurable, it follows from Theorem \textcolor{red}{17.28} of Aliprantis-Border \cite{AB} that 
$\Lambda$ is jointly lower measurable as well and the correspondence $\Lambda(\omega,\cdot) : X \rightarrow 2^X$ is 
upper-semicontinuous for each $\omega\in \Omega$. By Corollary \textcolor{red}{2.11} of Yannelis-Rustichini \cite{YR}, there is
a measurable function $x^{*} : \Omega \to X$ such that 
$x^{*}(\omega) \in \Lambda(\omega,x^{*}(\omega))$ $\mu$-a.e. on $\Omega$. If $(\omega, x^{*}(\omega)) \in U_{\Psi_i}$ for all $\omega\in 
\Omega_0\in \mathscr F$ with $\mu(\Omega_0) >0$, then by definition of $F_{i}$, we have $x_i^{*}(\omega) = \psi_{i}(\omega, x^{*}(\omega)) \in {\rm con}\, 
P_{i}(\omega,x^{*}(\omega))$, contrary to Assumption 2. Thus, $(\omega,x^{*}(\omega)) \notin U_{\Psi_i}$ holds for $\mu$-a.e. on $\Omega$ 
and all $i \in \mathbf I$. In other words, we have $\Psi_{i}(\omega,x^{*}(\omega)) =  \emptyset$ $\mu$-a.e. on $\Omega$ and all $i\in \mathbf I$. This means that 
$x^{*}:\Omega \to X$ is a random equilibrium for $\mathscr E$, and this completes the proof of the theorem. 
\end{proof}

As a corollary to Theorem \ref{thm:extrangame}, we obtain a generalised random version of Nash's equilibrium existence result. To this end, 
for each $i\in \mathbf I$, $\omega\in \Omega$ and $x\in X$, we define
\[
 P_{i}^{u_i}(\omega,x) = \{ y_{i} \in X_{i} : v_{i}(\omega,x,y_{i}) >0\}, 
 \]
 where $v_{i}(\omega,x,y_{i})$ = $u_{i}(\omega,y_{i},x_{-i})- u_{i}(\omega, x)$. Before we state our corollary, we first note that 
if $u_i(\omega,\cdot)$ is continuous for each $\omega\in \Omega$ and $u_i(\cdot, x)$ is measurable for each $x\in X$ then $P_i^{u_i}$ satisfies the strong 
continuous inclusion property. Indeed, $u_i(\omega,\cdot)$ is continuous implies that $P_i^{u_i}(\omega,\cdot)$ has an open graph in $X\times X_i$
and hence, it is lower-semicontinuous. We now prove that $P_i^{u_i}$ is jointly lower measurable. To do this, since for each fixed $\omega\in \Omega$ 
the function $u_i(\omega,\cdot)$ is continuous and for each fixed $x\in X$ the function $u_i(\cdot,x)$ is measurable, it follows from a standard 
result that $u_i$ is jointly measurable. Choose an open subset $V$ of a separable Banach space $Y$. We must show that 
\[
\{(\omega, x)\in \Omega\times X: P_i^{u_i}(\omega,x) \cap V \neq \emptyset\}\in \mathscr F\otimes \mathscr B(X).
\] 
Let $D$ be a countable dense subset of $V$. Observe that 
\begin{align*}
                \{(\omega, x)\in \Omega\times X :P_i^{u_i}(\omega,x) \cap V \neq \emptyset\} &= \{(\omega, x)\in \Omega\times X :v_i(\omega,x, y_i) > 0
\mbox{ for some } y_i\in V\}\\
                &=\{(\omega, x)\in \Omega\times X :v_i(\omega,x, y_i) > 0
\mbox{ for some } y_i\in D\}\\
                & =\bigcup_{y_i \in D}\{(\omega, x)\in \Omega\times X :v_i(\omega,x, y_i) > 0\}
            \end{align*}
         and the latter set belongs to $\mathscr F\otimes \mathscr B(X)$ since for each fixed $y_i\in D$ the function $v_i(\cdot,\cdot,y_i)$ 
is $\mathscr F\otimes \mathscr B(X)$-measurable. Hence, $P_i^{u_i}$ is jointly lower measurable. Therefore, $P_i^{u_i}$ satisfies the strong continuous inclusion 
property whenever $u_i(\omega,\cdot)$ is continuous for each $\omega\in \Omega$ and $u_i(\cdot, x)$ is measurable for each $x\in X$. 
\begin{corollary}\label{cor:3.3}
    Let $\Gamma = \{(X_{i},u_i) : i \in \mathbf I \}$ be a Nash-type random game satisfying for each $i\in \mathbf I$ the following assumptions: \begin{enumerate}
        \item $X_{i}$ is a nonempty, compact and convex subset of a separable Banach space $Y$;
        \item for each $(\omega,x_{-i}) \in \Omega\times X_{-i}$ the function $u_{i}(\omega,x_{i},x_{-i})$ is 
        quasi-concave in $x_{i}$;  
        \item the correspondence $P_i^{u_i}$ satisfies the continuous inclusion property if the measure space is atomic and the strong continuous 
inclusion property, otherwise; and 
 \item either $Y$ is finite-dimensional or $\mathbb K(P_i^{u_i})(\omega,x)\neq \emptyset$ for all $(\omega, x)\in U_{P_i^{u_i}}$.
    \end{enumerate}
    Then there exists a random Nash equilibrium for $\Gamma$.
\end{corollary}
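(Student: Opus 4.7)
The plan is to reduce the statement to Theorem \ref{thm:extrangame} by passing from payoff functions to the associated strict upper-contour preference correspondences $P_i^{u_i}$. Set
\[
\mathscr E = \{(X_i, P_i^{u_i}) : i \in \mathbf I\},
\]
and note that the discussion preceding the corollary already records all the continuity/measurability facts that will be needed, so Assumptions 1, 3 and 4 of Theorem \ref{thm:extrangame} transfer directly from the assumptions of the corollary.

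First I would dispose of the irreflexivity condition, namely Assumption 2 of Theorem \ref{thm:extrangame}: for every measurable $x:\Omega\to X$, show $x_i(\omega)\notin {\rm con}\, P_i^{u_i}(\omega, x(\omega))$ $\mu$-a.e. Suppose otherwise at some $\omega$; then there exist $y_i^1,\dots,y_i^k\in X_i$ and convex weights $\lambda_1,\dots,\lambda_k\ge 0$ with $\sum_j\lambda_j=1$ such that $x_i(\omega)=\sum_j\lambda_j y_i^j$ and $u_i(\omega, y_i^j, x_{-i}(\omega))>u_i(\omega, x(\omega))$ for every $j$. By the quasi-concavity of $u_i(\omega,\cdot,x_{-i}(\omega))$ in the first coordinate,
\[
u_i(\omega, x(\omega))=u_i(\omega, x_i(\omega), x_{-i}(\omega))\ge \min_{j} u_i(\omega, y_i^j, x_{-i}(\omega))>u_i(\omega, x(\omega)),
\]
a contradiction. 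Hence Assumption 2 of Theorem \ref{thm:extrangame} holds.

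Next I apply Theorem \ref{thm:extrangame} to $\mathscr E$ to obtain a measurable function $x^*:\Omega\to X$ with $P_i^{u_i}(\omega, x^*(\omega))=\emptyset$ for all $i\in \mathbf I$ and almost every $\omega\in\Omega$. Unwrapping the definition of $P_i^{u_i}$, this says that for almost every $\omega$ and every $i\in\mathbf I$,
\[
u_i(\omega, y_i, x_{-i}^*(\omega))\le u_i(\omega, x^*(\omega))\qquad \text{for all } y_i\in X_i,
\]
so $u_i(\omega, x^*(\omega))=\max\{u_i(\omega, y_i, x_{-i}^*(\omega)): y_i\in X_i\}$. This is exactly the definition of a random Nash equilibrium for $\Gamma$, completing the proof.

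The main potential obstacle is the irreflexivity step, since condition 2 of Theorem \ref{thm:extrangame} is phrased in terms of the convex hull rather than $P_i^{u_i}$ itself. Fortunately quasi-concavity is precisely tailored to this use: any strict improvement along a convex combination would contradict a min-type lower bound, as carried out above. All remaining checks (nonemptiness, joint lower measurability, strong continuous inclusion property) are either hypotheses or are already recorded in the discussion preceding the corollary, so no further work is required.
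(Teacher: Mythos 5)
Your proposal is correct and follows essentially the same route as the paper: reduce to Theorem \ref{thm:extrangame} via the game $\mathscr E=\{(X_i,P_i^{u_i})\}$, use quasi-concavity to verify the convex-hull irreflexivity condition, and unwrap $P_i^{u_i}(\omega,x^*(\omega))=\emptyset$ into the max condition. The only cosmetic difference is that the paper notes that quasi-concavity makes $P_i^{u_i}(\omega,x)$ itself convex (so ${\rm con}\,P_i^{u_i}=P_i^{u_i}$ and irreflexivity is immediate), whereas you verify $x_i(\omega)\notin{\rm con}\,P_i^{u_i}(\omega,x(\omega))$ directly by the min-type bound --- the same fact in a slightly unpacked form.
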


\begin{proof}
         It follows from Assumption 2 that each $P_{i}^{u_i}$ is convex valued, and clearly for any measurable function $ x: \Omega \to X$ we have 
$x_{i}(\omega) \notin P_{i}^{u_i}(\omega,x(\omega))={\rm con}\,P_{i}^{u_i}(\omega,x(\omega))$ for all $\omega\in \Omega$. Hence, the random game 
$\mathscr E= \{(X_{i},P_{i}^{u_i}):i \in \mathbf I\}$ satisfies the assumption of Theorem \ref{thm:extrangame} and therefore $\mathscr E$ has a 
random equilibrium. That is, there exists a measurable function $x^{*}: \Omega \to X$ such that $P_{i}^{u_i}(\omega,x^{*}(\omega))= \emptyset$ 
$\mu$-a.e. on $\Omega$ and all $i \in \mathbf I $. But this implies that 
\[
         u_{i}(\omega,x^{*}(\omega)) = \max\left\{u_{i}(\omega, y_{i}, x_{-i}^*(\omega)):{y_i} \in X_{i}\right\}
 \]
  $\mu$-a.e. on $\Omega$ and all $i \in \mathbf I $. Hence, $x^{*}$ is a random Nash equilibrium for the game $\Gamma =\{(X_{i},u_{i}):i \in \mathbf I\}$.
    \end{proof}

\subsection{Bayesian games}
We now turn to the problem of the existence of equilibrium points for Bayesian games. To do this, let $(\Omega,\mathscr F,\mu)$ be a complete 
finite probability space, denoting the space of states of nature of the world. We denote by $\mathbf I$ the set of players, which may be finite or 
countably infinite.

\begin{definition}
    A \emph{Bayesian game} is a set $\mathscr{G}=\{(X_{i},u_{i},\mathscr F_{i},q_{i}) : i \in \mathbf I\}$ of quadruples such that 
    \begin{enumerate}
        \item $X_{i}$ is the strategy set of player $i\in \mathbf I$; 
        \item $u_{i}: \Omega \times X \to \mathbb R$ (where $X = \Pi_{i \in \mathbf I} X_{i}$) is the random payoff function of player $i\in \mathbf I$;
        \item $\mathscr F_{i}$ is a measurable partition of $\Omega$ denoting the (private) information available to player $i\in \mathbf I$; and
        \item $q_{i}: \Omega \to (0,1]$ is the prior probability density of player $i\in \mathbf I$, i.e., $q_{i}$ is a measurable function having the property 
that $\int_{\Omega} q_{i}(\omega) d \mu(\omega) = 1$.
    \end{enumerate}
    
\end{definition}

\medskip
We consider the case where the information set of each player $i\in \mathbf I$ is the same, i.e., $\mathscr F_{i}=\mathscr F$ for each $i \in \mathbf I$. 
Denote by $\mathbf E(\omega)$ the event in $\mathscr F$ which contains the realized state of nature $\omega \in \Omega$, and suppose that 
$q_{i}(\mathbf E(\omega)) >0$ for all $i \in \mathbf I$ and all $\omega\in \Omega$. Given $\mathbf E(\omega)$, the conditional expected utility of player 
$i\in \mathbf I$ is the function $h_{i}: \Omega \times X \to \mathbb R$ defined by
\[
    h_{i}(\omega,x) = \int_{\mathbf E(\omega)} q_{i}(t|\mathbf E(\omega ))u_{i}(t,x)d\mu(t),
\]
where
\[
  q_{i}(t|\mathbf E(\omega)) = \left\{
  \begin{array}{ll}
  0,& \mbox{if $t\notin \mathbf E(\omega)$;}\\[0.5em]
    \frac{q_{i}(t)}{\int_{\mathbf E(\omega)}q_{i}(s)d\mu(s)}, & \mbox{if $t\in \mathbf E(\omega)$.}
  \end{array}
  \right.
  \]
\begin{definition}
A \emph{Bayesian equilibrium} for a Bayesian game $\mathscr G$
is a function $x^{*}: \Omega \to X$ such that each $x^{*}_{i}$ is $\mathscr F$-measurable and for each $i \in \mathbf I$ we have
\begin{equation*}
    h_{i}(\omega,x^{*}(\omega)) =\,\max\left\{h_{i}(\omega, y_{i}, x_{-i}^*(\omega)):{y_i} \in X_{i}\right\}
\end{equation*}
for almost all $\omega \in \Omega$.
\end{definition}
Note that if $u_{i}(\omega,\cdot)$ is continuous and $u_{i}$ is integrably bounded then by virtue of the Lebesgue dominated convergence theorem, we can  
conclude that the function
    \begin{equation*}
        h_{i}(\omega,\cdot) = \int_{\mathbf E(\omega)} q_{i}(t|\mathbf E(\omega))u_i(t,\cdot)d\mu(t)
    \end{equation*}
    is continuous, where
\[
  q_{i}(t|\mathbf E(\omega)) = \left\{
  \begin{array}{ll}
  0,& \mbox{if $t\notin E(\omega)$;}\\[0.5em]
    \frac{q_{i}(t)}{\int_{\mathbf E(\omega)}q_{i}(s)d\mu(s)}, & \mbox{if $t\notin E(\omega)$.}
  \end{array}
  \right.
  \]
Furthermore, it can be easily seen that each function $h_{i}(\cdot,x)$ is $\mathscr F$-measurable.
For each $i\in \mathbf I$, $\omega\in \Omega$ and $x\in X$, we define
\[
 Q_{i}^{u_i}(\omega,x) = \{ y_{i} \in X_{i} : v_{i}(\omega,x,y_{i}) >0\}, 
 \]
 where $v_{i}(\omega,x,y_{i}) = h_{i}(\omega,y_{i},x_{-i})- h_{i}(\omega, x)$. Note that $Q_{i}^{u_i}$ is the same as $P_{i}^{h_i}$, as 
defined in the previous section. Consequently, if $u_{i}(\omega,\cdot)$ is continuous and $u_{i}$ is integrably bounded then, as $h_i(\omega,\cdot)$ is continuous for all $\omega\in \Omega$ and $h_i(\cdot,
x)$ is ${\mathscr F}$-measurable for all $\omega\in \Omega$, it follows from our previous section that 
$Q_i^{u_i}=P_{i}^{h_i}$ has the strong continuous inclusion property. We now ready to establish our Bayesian equilibrium existence theorem without assuming 
the continuity of $u_i(\omega,\cdot)$ for all $i\in \mathbf I$. 
\begin{theorem}\label{thm:3.1}
    Let $\mathscr{G}=\{(X_{i},u_{i},\mathscr F_{i},q_{i}) : i \in \mathbf I\}$ be a Bayesian game satisfying for each $i\in \mathbf I$ the following properties:
\begin{enumerate}
        \item each $X_{i}$ is a nonempty, compact and convex subset of a separable Banach space $Y$;
        \item for each $\omega \in \Omega$ and each $x_{-i} \in X_{-i}(=\Pi_{j \neq i} X_{j})$ the function $u_{i}(\omega,x_{i},x_{-i})$ is 
        concave in $x_{i}$; 
        \item the correspondence $Q_i^{u_i}$ satisfies the continuous inclusion property if the measure space is atomic and the strong continuous 
inclusion property, otherwise; and 
 \item either $Y$ is finite-dimensional or $\mathbb K(Q_i^{u_i})(\omega,x)\neq \emptyset$ for all $(\omega, x)\in U_{Q_i^{u_i}}$.
    \end{enumerate}
Then the game $\mathscr{G}$ has a Bayesian equilibrium.
\end{theorem}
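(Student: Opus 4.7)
The natural approach is to reduce Theorem \ref{thm:3.1} to a direct application of Corollary \ref{cor:3.3} by passing from the Bayesian game $\mathscr{G}$ to the Nash-type random game $\Gamma=\{(X_i,h_i):i\in \mathbf I\}$, where $h_i$ is the conditional expected utility function constructed from $u_i$ above the theorem statement. Once $\Gamma$ is shown to satisfy the four hypotheses of Corollary \ref{cor:3.3}, the random Nash equilibrium $x^{*}:\Omega\to X$ for $\Gamma$ will, by the very definition of $h_i$, be the required Bayesian equilibrium for $\mathscr{G}$.

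The first step is to verify the four hypotheses of Corollary \ref{cor:3.3} for $\Gamma$. Hypothesis (1) on the strategy sets transfers verbatim from hypothesis (1) of the theorem. For the quasi-concavity required by hypothesis (2) of the corollary: since $u_i(\omega,\cdot,x_{-i})$ is concave in $x_i$ for each $\omega$ and each $x_{-i}$ by hypothesis (2) of the theorem, the conditional-expectation integral
\[
h_i(\omega,x_i,x_{-i})=\int_{\mathbf E(\omega)}q_i(t|\mathbf E(\omega))\,u_i(t,x_i,x_{-i})\,d\mu(t)
\]
is, as a nonnegative weighted integral of concave functions, itself concave and hence quasi-concave in $x_i$. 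For hypothesis (3), note that the discussion preceding the theorem identifies $Q_i^{u_i}$ and $P_i^{h_i}$ as the same correspondence (both are defined through the sign of the same function $v_i=h_i(\omega,y_i,x_{-i})-h_i(\omega,x)$), so the (strong) continuous inclusion property assumed for $Q_i^{u_i}$ is exactly the (strong) continuous inclusion property for $P_i^{h_i}$. Hypothesis (4) transfers for the same reason, since $\mathbb K(Q_i^{u_i})=\mathbb K(P_i^{h_i})$ and $U_{Q_i^{u_i}}=U_{P_i^{h_i}}$.

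Having verified these hypotheses, Corollary \ref{cor:3.3} produces a measurable function $x^{*}:\Omega\to X$ such that, for $\mu$-almost every $\omega\in \Omega$ and every $i\in \mathbf I$,
\[
h_i(\omega,x^{*}(\omega))=\max\{h_i(\omega,y_i,x_{-i}^{*}(\omega)):y_i\in X_i\}.
\]
The measurability of each coordinate $x_i^{*}$ is delivered directly by Corollary \ref{cor:3.3}, which satisfies the $\mathscr F$-measurability requirement in the Bayesian equilibrium definition (recall $\mathscr F_i=\mathscr F$ in this framework). The displayed identity is precisely the defining condition of a Bayesian equilibrium for $\mathscr{G}$, so $x^{*}$ is the desired equilibrium.

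The main conceptual step is the clean observation that the (strong) continuous inclusion property is a property of the correspondence itself, and thus carries across the identification $Q_i^{u_i}=P_i^{h_i}$ without any continuity assumption on $u_i(\omega,\cdot)$. The power of Theorem \ref{thm:3.1} lies exactly here: the SCIP hypothesis on the preference correspondence $Q_i^{u_i}$ substitutes for the continuity of $u_i(\omega,\cdot)$ (and hence, via dominated convergence, of $h_i(\omega,\cdot)$) that would classically be needed to guarantee lower-semicontinuity of $P_i^{h_i}(\omega,\cdot)$. By bypassing that chain of deductions we accommodate discontinuous payoff structures, which is the whole point of the machinery developed in Sections \ref{sec:model}--\ref{sec:main}.
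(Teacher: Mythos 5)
Your proposal is correct and follows essentially the same route as the paper: reduce to the Nash-type random game $\Gamma=\{(X_i,h_i):i\in\mathbf I\}$, note that concavity of $u_i$ in $x_i$ passes to $h_i$ under the conditional-expectation integral, use the identification $Q_i^{u_i}=P_i^{h_i}$ to transfer the (strong) continuous inclusion and $\mathbb K$-nonemptiness hypotheses, and invoke Corollary \ref{cor:3.3}. Your write-up is in fact slightly more explicit than the paper's about why each hypothesis of the corollary is met.
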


\begin{proof}
 For each $\omega\in \Omega$ and each $x_{-i} \in X_{-i}$, by virtue of Condition \textcolor{red}{2}, it can be show that 
the function $h_{i}(\omega,\cdot,x_{-i})$ is concave. We consider the Bayesian game $\mathscr{G}=\{(X_{i},u_{i},\mathscr F_i,q_{i}): i \in 
\mathbf I\}$ as a Nash-type random game $\Gamma=\{(X_{i},h_{i}): i \in \mathbf I\}$. Since for each $i\in \mathbf I$ and $x\in X$ the function $h_i(\cdot,x)$ 
is $\mathscr F$-measurable, we conclude that the existence of a random Nash equilibrium for the game $\Gamma$ implies that existence 
of a Bayesian equilibrium for the game $\mathscr G$.
It can be easily seen that the random game $\Gamma$ satisfies all the 
conditions of Corollary \ref{cor:3.3}, and consequently, the game $\Gamma$ has a random Nash equilibrium. Hence, there exists an 
$\mathscr F$-measurable function $x^{*}: \Omega \to X$ such that \begin{equation*}
    h_{i}(\omega,x^{*}(\omega)) =\,\max\left\{h_{i}(\omega, y_{i}, x_{-i}^*(\omega)):{y_i} \in X_{i}\right\}
\end{equation*}
$\mu$-a.e. on $\Omega$ and all $i \in \mathbf I$. In other words, $x^{*}$ is a Bayesian equilibrium for the game $\mathscr{G}=
\{(X_{i},u_{i},\mathscr F_i,q_{i}): i \in \mathbf I\}$, and the proof of the theorem is finished.
\end{proof}

\subsection{Asymmetric Bayesian games}

We now turn to the problem of the existence of equilibrium points for asymmetric Bayesian games. To do this, let $(\Omega,\mathscr F,\mu)$ be a complete 
finite, separable measure space, denoting the space of states of nature of the world. We denote by $\mathbf I$ the set of players, where 
$\mathbf I$ can be finite or countably infinite, and by $Y$ a separable Banach space.  

\begin{definition}
An \emph{asymmetric Bayesian game} (or a \emph{game with asymmetric information}) is a set
\[
\mathscr G^a = \{(X_i, u_i, \mathscr{F}_i, q_i) : i \in \mathbf I\},
\]
where
\begin{enumerate}
    \item $X_i: \Omega \rightarrow 2^Y$ is the \textit{action set-valued function} of player $i\in \mathbf I$, where $X_i(\omega)$ is the set of actions available to player $i\in \mathbf I$ when the state is $\omega\in \Omega$;
    \item for each $\omega \in \Omega$, $u_i(\omega, \cdot) : \prod_{j \in \mathbf I} X_j(\omega) \to \mathbb{R}$ is the \textit{state-dependent utility function} of player $i\in \mathbf I$;

    \item $\mathscr{F}_i$ is a sub $\sigma$-algebra of $\mathscr{F}$ which denotes the \textit{private information} of player $i\in \mathbf I$; and 
    \item $q_i : \Omega \to \mathbb{R}_{++}$ is the \textit{prior} of player $i\in \mathbf I$, which is a Radon-Nikodym derivative such that $\int q_i(\omega) \, d\mu(\omega) = 1$.
\end{enumerate}
\end{definition}

Let $L_{X_i}$ denote the set of all Bochner integrable and $\mathscr{F}_i$-measurable selections from the action correspondence $X_i: \Omega \rightarrow 2^Y$ of player $i\in \mathbf I$, i.e.,
\[
L_{X_i} = \{\widetilde{x}_i \in L_1 (\mu, Y) : \widetilde{x}_i \text{ is } \mathscr{F}_i\text{-measurable and } \widetilde{x}_i(\omega) \in X_i(\omega) \text{ $\mu$-a.e.}\}.
\]
The typical element of $L_{X_i}$ is denoted as $\widetilde{x}_i$, while that of $X_i(\omega)$ as $x_i(\omega)$ (or $x_i$).  A \emph{strategy} for 
player $i\in \mathbf I$ is an 
element $\widetilde{x}_i$ in $L_{X_i}$. Let $L_X = \prod_{i\in \mathbf I} L_{X_i} \mbox{ and } L_{X_{-i}} = \prod_{j\neq i} L_{X_j}$.

\medskip
Throughout the rest of this subsection, we assume that for each $i\in \mathbf I$ there exists a finite or countable partition $\pi_i$ of $\Omega$. Moreover, the $\sigma$-algebra 
$\mathscr{F}_i$ is assumed to be generated by $\pi_i$. For each $\omega \in \Omega$, let $\mathbf E_i(\omega) \in \pi_i$ denote the smallest set in $\mathscr{F}_i$ containing 
$\omega$, and we assume that 
\[
\int_{\mathbf{E}_i(\omega)} q_i(t) \, d\mu(t) > 0
\]
for all $i\in \mathbf I$. For each $\omega \in \Omega$, the \textit{conditional $($interim$)$ expected utility function} of 
player $i$, $h_i(\omega, \cdot, \cdot) : L_{X_{-i}} \times X_i(\omega) \to \mathbb{R}$, is defined as:
\[
h_i(\omega, \widetilde{x}_{-i }, x_i) = \int_{{\mathbf E}_i(\omega)} u_i(t, \widetilde{x}_{-i}(t), x_i) q_i(t|{\mathbf E}_i(\omega)) \, d\mu(t),
\]
where
\[
q_i(t | {\mathbf E}_i(\omega)) =
\begin{cases}
    0, & \text{if } t\notin {\mathbf E}_i(\omega);\\
    \frac{q_i(t)}{\int_{\mathbf E_i(\omega)} q_i(s) \, d\mu(s)}, & \text{if } t\in {\mathbf E}_i(\omega).
\end{cases}
\]
The function $\nu_i(\omega, \widetilde{x}_{-i}, x_i)$ is interpreted as the conditional expected utility of player $i$ using the action $x_i$ when the state is $\omega$ 
and the other agents employ the strategy profile $\widetilde{x}_{-i}$, where $\widetilde{x}_{-i}$ is an element of $L_{X_{-i}}$.

\begin{definition}
A \emph{Bayesian Nash equilibrium} for $\mathscr G^a$ is a strategy profile $\widetilde{x}^* \in L_X$ such that for all $i \in \mathbf I$ and $\mu\text{-a.e.}$ on $\omega$,
\[
h_i(\omega, \widetilde{x}^*_{-i}, \widetilde{x}^*_i(\omega)) = \max\{h_i(\omega, \widetilde{x}^*_{-i}, y_i):y_i \in X_i(\omega)\}. 
\]
\end{definition}

\medskip
Before we study the existence of a Bayesain Nash equilibrium for $\mathscr G^a$, we first define a correspondence $Q_i^{u_i} : \Omega \times 
L_{X_{-i}} \rightarrow 2^Y$ for each $i\in \mathbf I$ by letting 
\[   
 Q_i^{u_i}(\omega, \widetilde{x}_{-i}) = \{y_i \in X_i(\omega) : \nu_i(\omega, \widetilde{x}, y_i) > 0\},
\]
where $\nu_i(\omega, \widetilde{x}, y_i):=h_i(\omega, \widetilde{x}_{-i}, y_i) - h_i(\omega, \widetilde{x}_{-i}, \widetilde{x}_i(\omega) )$. 
In what follows, we show under some continuity and measurability assumptions, $Q_i^{u_i}$ satisfies the strong continuous inclusion 
property. To this end, we assume the following:

    \begin{enumerate}
        \item  For each $\omega \in \Omega$, $u_i(\omega, \cdot, \cdot) : \prod_{j\neq i} X_j(\omega) \times X_i(\omega) \to \mathbb{R}$ is continuous, where 
$X_j(\omega)$ $(j \neq i)$ is endowed with the weak topology and $X_i(\omega)$ with the norm topology.
        \item For each $x \in \prod_{j\in \mathbf I} Y_j$ with $Y_j = Y$, $u_{i}(\cdot, x): \Omega \to \mathbb{R}$ is $\mathscr{F}_i$-measurable.
        \item $u_i$ is integrably bounded.
        \end{enumerate}

Under the above assumptions, it follows from a standard argument (see, for example, \cite{Balder}, \cite[Lemma A1]{KY}) that for each $\omega \in \Omega$, $h_i(\omega, \cdot, \cdot) : L_{X_{-i}} 
\times X_i(\omega) \to \mathbb{R}$ is continuous, where $L_{X_{j}}$ $(j \neq i)$ is endowed with the weak topology and $X_i(\omega)$ 
with the norm topology. Furthermore, for each $(\widetilde{x}_{-i}, x_i) \in L_{X_{-i}} \times X_i(\omega)$, $h_i(\cdot, \widetilde{x}_{-i}, x_i) : 
\Omega \to \mathbb{R}$ is $\mathscr{F}_i$-measurable. 
 It follows from the continuity of $h_i(\omega,\cdot,\cdot)$ that for each $\omega \in \Omega$, 
$Q_i^{u_i}(\omega, \cdot)$ has an open graph in $L_{X_{-i}} \times X_i(\omega)$, where $L_{X_{j}}$ $(j \neq i)$ is endowed with the weak topology and $X_i(\omega)$ 
with the norm topology. Therefore, we can conclude that $Q_i^{u_i}(\omega,\cdot)$ is lower-semicontinuous. The $\mathscr F_i\otimes 
\mathscr B(L_{X_{-i}})$-measurability of $Q_i^{u_i}$ can be done 
analogous to Subsection \ref{subsec:randomgame}. Consequently, under \textcolor{red}{1-3},  
$Q_i^{u_i}$ has the strong continuous inclusion property. We now ready to state our Bayesian equilibrium existence theorem without assuming 
the fact that $u_i(\omega,\cdot)$ is continuous for all $\omega\in \Omega$ and $i\in \mathbf I$.

\begin{theorem}\label{thm:asybaye}
    Let $\mathscr{G}=\{(X_{i},u_{i},\mathscr F_{i},q_{i}) : i \in \mathbf I\}$ be a Bayesian game satisfying for each $i\in \mathbf I$ the following properties:
\begin{enumerate}
        \item  $X_i: \Omega \rightarrow 2^Y$ is a non-empty, convex, compact-valued and integrably bounded correspondence having a 
$\mathscr{F}_i$-measurable graph, i.e., $G_{X_i} \in \mathscr{F}_i \otimes \mathscr B(Y)$;
        \item for each $\omega \in \Omega$  and ${x_{-i} \in \prod_{j \neq i} X_{j}(\omega)}$, ${u_i(\omega, x_{-i}, \cdot) : X_i(\omega) \to \mathbb{R}}$ is concave; 
        \item the correspondence $Q_i^{u_i}$ satisfies the continuous inclusion property if the measure space is atomic and the strong continuous 
inclusion property, otherwise; and 
 \item either $Y$ is finite-dimensional or $\mathbb K(Q_i^{u_i})(\omega,\widetilde{x}_{-i})\neq \emptyset$ for all $(\omega, \widetilde{x}_{-i})\in U_{Q_i^{u_i}}$.
    \end{enumerate}
\end{theorem}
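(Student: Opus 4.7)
The goal is to produce a strategy profile $\widetilde x^\ast\in L_X$ such that for each $i$, $\widetilde x_i^\ast(\omega)$ maximizes $h_i(\omega,\widetilde x_{-i}^\ast,\cdot)$ on $X_i(\omega)$ for $\mu$-a.e.\ $\omega$. The plan parallels the proof of Theorem~\ref{thm:3.1}, but must be executed on the infinite-dimensional strategy space $L_X=\prod_i L_{X_i}$ endowed with the weak topology, with every measurability statement referred to the private-information sub-$\sigma$-algebra $\mathscr F_i$. Two preliminary facts drive the construction. First, Assumption~2 makes $h_i(\omega,\widetilde x_{-i},\cdot)$ concave on $X_i(\omega)$, so that $Q_i^{u_i}(\omega,\widetilde x_{-i})$ is convex (a strict super-level set of a concave function) and never contains $\widetilde x_i(\omega)$ itself since $\nu_i(\omega,\widetilde x,\widetilde x_i(\omega))=0$. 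Second, the discussion immediately preceding the theorem shows that $Q_i^{u_i}:\Omega\times L_{X_{-i}}\to 2^Y$ satisfies the (strong) continuous inclusion property with respect to $\mathscr F_i$ and the weak topology on $L_{X_{-i}}$.

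Given these facts, I would first apply Theorem~\ref{main:cara} to $Q_i^{u_i}$ over the measure space $(\Omega,\mathscr F_i,\mu|_{\mathscr F_i})$ and the (weakly compact, hence metrizable) parameter space $L_{X_{-i}}$, obtaining a Carath\'eodory-type selection $\psi_i:U_{Q_i^{u_i}}\to Y$ which is $\mathscr F_i$-measurable in $\omega$ and weakly continuous in $\widetilde x_{-i}$. Using Lemma~\ref{lem:upper} I then glue $\psi_i$ together with $X_i$ itself off its domain:
\[
F_i(\omega,\widetilde x_{-i})=
\begin{cases}
\{\psi_i(\omega,\widetilde x_{-i})\},& (\omega,\widetilde x_{-i})\in U_{Q_i^{u_i}},\\[2pt]
X_i(\omega),& \text{otherwise.}
\end{cases}
\]
By Lemma~\ref{lem:upper}, $F_i$ is nonempty, convex, closed-valued, $\mathscr F_i\otimes\mathscr B(L_{X_{-i}})$-lower measurable, and upper-semicontinuous in $\widetilde x_{-i}$.

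Next I would lift $F_i$ to the integrable-selection correspondence $\mathbb H_i:L_X\to 2^{L_{X_i}}$, $\mathbb H_i(\widetilde x)=\{\widetilde y_i\in L_{X_i}:\widetilde y_i(\omega)\in F_i(\omega,\widetilde x_{-i})\ \mu\text{-a.e.}\}$, and set $\mathbb H(\widetilde x)=\prod_{i\in\mathbf I}\mathbb H_i(\widetilde x)$. Since $X_i$ is integrably bounded with an $\mathscr F_i$-measurable graph, Lemma~4.3 of Yannelis~\cite{Yannelis:87} gives that $L_{X_i}$ is nonempty, convex, and weakly compact; Lemma~4.4 of the same paper, together with the Kuratowski--Ryll-Nardzewski selection theorem applied to $F_i(\cdot,\widetilde x_{-i})$, gives that $\mathbb H_i$ is nonempty, convex-valued, and weakly upper-semicontinuous. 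The Fan--Glicksberg fixed-point theorem then yields $\widetilde x^\ast\in L_X$ with $\widetilde x^\ast\in\mathbb H(\widetilde x^\ast)$. To identify $\widetilde x^\ast$ as an equilibrium, suppose for contradiction that, for some $i$, the set $\{\omega:(\omega,\widetilde x_{-i}^\ast)\in U_{Q_i^{u_i}}\}$ has positive $\mu$-measure; on that set, $\widetilde x_i^\ast(\omega)=\psi_i(\omega,\widetilde x_{-i}^\ast)\in Q_i^{u_i}(\omega,\widetilde x_{-i}^\ast)$, i.e., $\nu_i(\omega,\widetilde x^\ast,\widetilde x_i^\ast(\omega))>0$, contradicting $\nu_i(\omega,\widetilde x^\ast,\widetilde x_i^\ast(\omega))=0$. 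Hence $Q_i^{u_i}(\omega,\widetilde x_{-i}^\ast)=\emptyset$ a.e., which in view of concavity of $h_i(\omega,\widetilde x_{-i}^\ast,\cdot)$ is equivalent to $\widetilde x_i^\ast(\omega)$ maximizing $h_i(\omega,\widetilde x_{-i}^\ast,\cdot)$ on $X_i(\omega)$.

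The main obstacle is ensuring the $\mathscr F_i$-side of the Carath\'eodory selection, since equilibrium strategies must respect private information and hence must be $\mathscr F_i$-measurable, not merely $\mathscr F$-measurable. This forces one to verify that the strong continuous inclusion property for $Q_i^{u_i}$ holds relative to $\mathscr F_i$ (not just $\mathscr F$), which in turn rests on the fact that $h_i(\cdot,\widetilde x_{-i},x_i)$ is $\mathscr F_i$-measurable---this is precisely the content of the measurability calculation in the paragraph preceding the theorem. A secondary but nontrivial point is the interplay between the weak topology on $L_{X_{-i}}$ (needed for continuity of $\psi_i$ and of $\mathbb H_i$) and the norm topology on $X_i(\omega)$ (in which Carath\'eodory-type selections take their values); this is handled by integrable boundedness and standard weak-compactness/Fatou arguments in the style of Yannelis~\cite{Yannelis:87}.
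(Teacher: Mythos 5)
Your proposal is correct and follows essentially the same route as the paper: obtain a Carath\'eodory-type selection of $Q_i^{u_i}$ via Theorem~\ref{main:cara}, glue it with $X_i(\omega)$ off $U_{Q_i^{u_i}}$ using Lemma~\ref{lem:upper}, lift to the integrable-selection correspondence on the weakly compact, metrizable set $L_X$, apply Fan--Glicksberg, and derive the equilibrium property by the same contradiction argument. The only differences are cosmetic choices of auxiliary citations (Kuratowski--Ryll-Nardzewski versus the paper's use of the Aumann selection theorem, Diestel's theorem, and the infinite-dimensional Fatou lemma), which serve the same purpose.
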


\begin{proof}
Pick an $i\in \mathbf I$ and let $\Phi_i= Q_i^{u_i}$. From Theorem \ref{main:cara}, it follows that there exists a Carath\'eodory-type selection 
$\psi:U_{\Phi_i}\to Y$ of $\restr{\Phi_i}{U_{\Phi_i}}$. 
Let $F_i:\Omega\times  L_{X_{-i}}\rightarrow 2^{X_i}$ be a correspondence such that 
  \[
  F_i(\omega, \widetilde{x}_{-i}) = \left\{
  \begin{array}{ll}
  \{\psi_i(\omega,\widetilde{x}_{-i})\},& \mbox{if $(\omega, \widetilde{x}_{-i})\in U_{\Phi_i}$;}\\[0.5em]
  X_i(\omega), & \mbox{otherwise.}
  \end{array}
  \right.
  \]
Clearly, $F_i$ is nonempty, closed and convex valued. In virtue of Lemma \ref{lem:upper}, $F_i(\cdot, \widetilde{x}_{-i})$ is lower measurable and 
$F_i(\omega,\cdot):X\to 2^{X}$ is upper-semicontinous in the sense that $\left\{\widetilde{x}_{-i}\in L_{X_{-i}}: F_i(\omega,\widetilde{x}_{-i}) \subset V\right\}$  is a weakly 
open subset of $L_{X_{-i}}$ for every open subset $V$ of $Y$.
For each $i\in \mathbf I$, define $H_i: L_{X_{-i}} \rightarrow 2^{L_{X_i}}$ by
\[
H_i(\widetilde{x}_{-i}) = \{ \widetilde{y}_{i} \in L_{X_{i}} : \widetilde{y}_{i}(\omega) \in F_i(\omega, \widetilde{x}_{-i})\, \text{$\mu$-a.e.} \}.
\]
Since for each \( \widetilde{x}_{-i} \in L_{X_{-i}}, F_i(\cdot, \widetilde{x}_{-i}) \) has a measurable graph by virtue of the Aumann measurable selection theorem, there exists an \( \mathscr{F}_i \)-measurable function \( g_i : \Omega \to Y \) such that \( g_i(\omega) \in F_i(\omega, \widetilde{x}_{-i}) \) \( \mu \)-a.e. Since for each \( (\omega, \widetilde{x}_{-i}) \in 
\Omega \times L_{{X}_{-i}}$, $F_i(\omega, \widetilde{x}_{-i}) \subseteq X_i(\omega) \) and \( X_i(\cdot) \) is integrably bounded, it follows that \( g_i \in L_{X_i}$. Hence, $ g_i \in 
H_i(\widetilde{x}_{-i})$, i.e., $H_i$ is non-empty-valued. By Diestel's Theorem (see Khan \cite{Khan:84}),
$L_{X_i}$ is a weakly compact subset of $L_1(\mu, Y)$ (see Theorem 3, \cite{Yannelis:91b}).

Since the weak topology for a weakly compact subset of a separable Banach space is metrizable [\cite{DS}, p. 434], we can conclude that $L_{X_i}$ is metrizable, and since 
$\mathbf I$ is countable, so is $ L_X$. It follows from the Fatou Lemma in infinite-dimensional spaces (see, for example, \cite{Yannelis:91b}) that for each $i\in \mathbf I$, $H_i$ 
is (weakly) upper-semicontinuous, and it is obviously convex and closed-valued. Define $\Psi : L_{X} \rightarrow 2^{L_X}$ by letting 
\[
\Psi(x) = \prod_{i\in \mathbf I} H_i(\widetilde{x}_{-i}).
\]
Clearly, $L_X$ is non-empty, compact, convex, and $\Psi:L_X\rightrightarrows L_X$ is non-empty, convex, closed-valued and (weakly) upper-semicontinuous. 
By the Fan-Glicksberg fixed point theorem, there exists $\widetilde{x}^* \in L_X$ such that $\widetilde{x}^\ast \in \Psi(x^\ast)$. If $(\omega, \widetilde{x}^{*}_{-i}) \in 
U_{Q_i^{u_i}}$ for all $\omega\in \Omega_0\in \mathscr F_i$ with $\mu(\Omega_0) >0$, then by definition of $H_{i}$, we have $x_i^{*}(\omega) = 
\psi_{i}(\omega, \widetilde{x}^{*}_{-i}) \in Q_{i}^{u_i}(\omega,\widetilde{x}^{*}_{-i})$, a contradiction. Thus, $(\omega,\widetilde{x}^{*}_{-i}) \notin U_{Q_i^{u_i}}$ holds for $\mu$-a.e. on $\Omega$ 
and all $i \in \mathbf I$. In other words, we have $Q_{i}^{u_i}(\omega,\widetilde{x}^{*}_{-i}) =  \emptyset$ $\mu$-a.e. on $\Omega$ and all $i\in \mathbf I$. This means that 
$\widetilde{x}^*$ is a Bayesian Nash equilibrium for $\mathscr G^a$, and this completes the proof of the theorem. 
\end{proof}

\subsection{Random maximal elements}
Let $X$ be a nonempty subset of a linear topological space. Let $P : X \to 2^{X}$ be a preference correspondence. We read $y \in P(x)$ as ``y is strictly preferred to x". For instance if $>$ is a binary relation on $X$ one may define $P: X \to 2^{X}$ by $P(x) =\{y \in X : y > x\}$. The correspondence $P : X \to 2^{X}$ is said to have a maximal element if there exists $\Bar{x} \in X$ such that $P(\Bar{x}) = \emptyset$. Several results on the existence of maximal elements with applications to equilibrium theory have been given in the literature (see for instance Fan \cite{Fan:52}, He-Yannelis \cite{HY:2017}, Sonnenschein \cite{Sonn71} and Yannelis-Prabhakar \cite{YD:83}, among others). Notice that the above preference correspondences 
 need not be representable by utility functions. We will now allow our preference correspondence to depend on the states of nature, i.e., we allow for random preferences.

\medskip
Let $(\Omega,\mathscr F,\mu)$ be a complete finite measure space. We interpret $\Omega$ as the states of nature of the world and suppose that $\Omega$ is large enough to include all the events that we consider to be interesting. A random preference correspondence $P$ is a mapping from $\Omega \times X$ into $2^X$. We read $y \in P(\omega,x)$ as ``$y$ is strictly preferred to $x$ at the state of nature $\omega$". We can now introduce the concept of a random maximal element which is the natural analogue of the ordinary (deterministic) notion of a maximal element. The correspondence $P : \Omega \times X \to 2^X$ 
is said to have a \emph{random maximal element} if there exists a measurable function $x^*:\Omega \to X$ such that $P(\omega, x^*(\omega)) = \emptyset$ $\mu$-a.e. on $\Omega$.

\medskip
 The following theorem on the existence of random maximal elements below generalize the ordinary (deterministic) maximal elements results given in \cite{Fan:52, HY:2017, Sonn71, YD:83}. 
 \begin{theorem}\label{thm:1}
 Let $(\Omega,\mathscr F,\mu)$ be a complete finite measure space and $X$ be a nonempty, compact and convex subset of a separable Banach space $Y$. Let 
$P:\Omega \times X \rightrightarrows X$ be a correspondence $($possibly empty-valued$)$ such that: 
 \begin{enumerate}
     \item for every open subset $V$ of $X$, $\{(\omega,x) \in \Omega\times X : {\rm con} P(\omega,x) \cap V \neq \emptyset\}$ belongs to $\mathscr F \otimes \mathscr B(X)$;
     \item for every measurable function $x :\Omega\to X$ we have $x(\omega) \notin {\rm con}\, P(\omega,x(\omega))$ $\mu$-a.e. on $\Omega$; 
     \item the correspondence $P$ satisfies the continuous inclusion property if the measure space is atomic and the strong continuous 
inclusion property, otherwise; and 
 \item either $Y$ is finite-dimensional or $\mathbb K(P)(\omega,x)\neq \emptyset$ for all $(\omega, x)\in U_P$.
 \end{enumerate}
 Then there exists a measurable function $x^*:\Omega\to X$ such that $P(\omega,x^*(\omega))=\emptyset$ $\mu$-a.e. on $\Omega$.
 \end{theorem}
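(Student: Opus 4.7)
The plan is to realize Theorem~\ref{thm:1} as the one-player case of Theorem~\ref{thm:extrangame}. With $\mathbf{I}=\{1\}$, $X_{1}=X$, and $P_{1}=P$, the product $\prod_{i\in\mathbf{I}}X_{i}$ reduces to $X$, and a random equilibrium for the random game $\{(X_{1},P_{1})\}$ in the sense of Subsection~6.2 is precisely a measurable $x^{*}:\Omega\to X$ with $P(\omega,x^{*}(\omega))=\emptyset$ $\mu$-a.e., which is our conclusion. Hypotheses (1)--(4) of our theorem specialise exactly to the corresponding hypotheses of Theorem~\ref{thm:extrangame} in the one-player setting (note that hypothesis~(2) there reads $x_{1}(\omega)\notin\mathrm{con}\,P_{1}(\omega,x(\omega))$ $\mu$-a.e.\ for every measurable $x:\Omega\to X$, which is our condition~(2)), so a direct invocation of that theorem already produces the desired $x^{*}$.

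For a self-contained argument, I would unwind the proof of Theorem~\ref{thm:extrangame} in this setting. First, apply Theorem~\ref{main:cara} to $\Psi(\omega,x):=\mathrm{con}\,P(\omega,x)$, which inherits the (strong) continuous inclusion property from $P$ via the same family $\{F_{z},O_{z}^{t}\}$ (since $F_{z}\subseteq P\subseteq\mathrm{con}\,P$ and the remaining conditions in Definitions~\ref{CIPdef}--\ref{SCIPdef} are phrased in terms of $F_{z}$ and $\mathrm{con}\,F_{z}$) and satisfies $U_{\Psi}=U_{P}$ and $\mathbb{K}(\Psi)=\mathbb{K}(P)$, to obtain a Carath\'{e}odory-type selection $\psi:U_{\Psi}\to Y$. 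Next, define
\[
F(\omega,x)=\begin{cases}\{\psi(\omega,x)\},& (\omega,x)\in U_{\Psi};\\ X,& \text{otherwise},\end{cases}
\]
and observe via Lemma~\ref{lem:upper} (applied with the constant correspondence $X$ in place of the auxiliary $F$ there) that this $F$ is nonempty, convex, and closed-valued, jointly lower measurable, and upper-semicontinuous in $x$ for each $\omega$. A random Fan--Glicksberg fixed point result (Corollary~2.11 of Yannelis--Rustichini, as used in the proofs of Theorems~\ref{thm:extrangame} and~\ref{thm:asybaye}) then yields a measurable $x^{*}:\Omega\to X$ with $x^{*}(\omega)\in F(\omega,x^{*}(\omega))$ $\mu$-a.e.; hypothesis~(2) prohibits $(\omega,x^{*}(\omega))\in U_{\Psi}$ on a set of positive measure, so $P(\omega,x^{*}(\omega))=\emptyset$ $\mu$-a.e.

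The only point that warrants real attention is the measurability of the exceptional set $\{\omega:(\omega,x^{*}(\omega))\in U_{\Psi}\}$. This is exactly where hypothesis~(1) enters: taking $V=X$ gives $U_{\Psi}\in\mathscr{F}\otimes\mathscr{B}(X)$, after which measurability of the pullback along $\omega\mapsto(\omega,x^{*}(\omega))$ is routine. Beyond this bookkeeping, I do not anticipate a genuine obstacle: the main conceptual work has already been packaged into Theorem~\ref{main:cara} and Lemma~\ref{lem:upper}, and the reduction to the one-player random game makes the argument essentially automatic.
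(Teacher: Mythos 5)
Your proposal is correct, and its core is the paper's own argument: apply Theorem~\ref{main:cara} to $\Psi=\mathrm{con}\,P$, patch the resulting Carath\'eodory selection with the constant value $X$ off $U_\Psi$, extract a measurable random fixed point, and derive a contradiction with hypothesis~(2) on any positive-measure set where $(\omega,x^*(\omega))\in U_\Psi$. The two points where you diverge are both sound and arguably cleaner. First, your observation that the theorem is literally the one-player case of Theorem~\ref{thm:extrangame} is valid (the hypotheses of that theorem are all contained in (1)--(4), with hypothesis~(1) here being redundant for that route), and it would let the result be stated as a corollary; the paper instead repeats the construction. Second, at the fixed-point step the paper goes through lower-semicontinuity of the patched correspondence $G(\omega,\cdot)$ and its own Theorem~\ref{thm:fixed}, whereas you use upper-semicontinuity and the Yannelis--Rustichini random fixed point theorem, exactly as in the proof of Theorem~\ref{thm:extrangame}; since the patched correspondence is constant (equal to $X$) off the open set $U_\Psi^\omega$ and single-valued continuous on it, it is both u.s.c.\ and l.s.c.\ in $x$, so either route works. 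Your remark on the measurability of $\{\omega:(\omega,x^*(\omega))\in U_\Psi\}$, via hypothesis~(1) with $V=X$ (or via $U_\Psi=U_\Phi$ for the jointly lower measurable $\Phi$ of the lemmata), fills a detail the paper passes over silently.
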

\begin{proof}
Let $\Psi: \Omega \times X \to 2^X$ be such that $\Psi (\omega, x) = {\rm con}P(\omega,x)$ for all $(\omega, x)\in \Omega\times X$. 
From Theorem \ref{main:cara}, it follows that there exists a Carath\'eodory-type selection $\psi:U_{\Psi}\to Y$ of $\restr{\Psi}{U_{\Psi}}$.
Let $G:\Omega\times X\rightarrow 2^{X}$ be a correspondence such that 
  \[
  G(\omega, x) = \left\{
  \begin{array}{ll}
  \{\psi(\omega,x)\},& \mbox{if $(\omega, x)\in U_{\Psi}$;}\\[0.5em]
  X, & \mbox{otherwise.}
  \end{array}
  \right.
  \]
Clearly, $G$ is nonempty, closed and convex valued.  By Lemma \ref{lem:upper}, $G$ is jointly lower measurable and 
$G(\omega,\cdot):X\to 2^{X}$ is lower-semicontinous for all $\omega\in \Omega$.
Therefore, $G$ satisfies the strong continuous inclusion property and consequently, by 
Theorem \ref{thm:fixed}, $G$ has a random fixed point, i.e., there exists a measurable function $x^{*} : \Omega \to X$ such that 
$x^{*}(\omega) \in G(\omega,x^{*}(\omega))$ $\mu$-a.e. on $\Omega$. Analogous to Theorem \ref{thm:extrangame}, one can show 
that $P(\omega,x^*(\omega))=\emptyset$ $\mu$-a.e. on $\Omega$. 
\end{proof}

\end{document}